\documentclass[a4paper, 12pt, reqno]{amsart}
\usepackage{amsmath,amsfonts,amssymb,amsthm,enumerate}
\usepackage{graphicx}
 \usepackage{xy} \xyoption{all}

\textwidth6.5in
\textheight9in
\hoffset=-.75in
\voffset=-.5in

\newtheorem{thm}{Theorem}[section]
\newtheorem{cor}[thm]{Corollary}
\newtheorem{prop}[thm]{Proposition}
\newtheorem{lem}[thm]{Lemma}

\theoremstyle{definition}
\newtheorem{defn}[thm]{Definition}
\newtheorem{exas}[thm]{Examples}
\newtheorem{example}[thm]{Example}
\newtheorem{rem}[thm]{Remark}

\let\phi\varphi


\pagestyle{plain}
\begin{document}
\title{Leavitt path algebras having \\ Unbounded Generating Number}
\maketitle
\begin{center}
G.~Abrams\footnote{Department of Mathematics, University of Colorado, Colorado Springs,
Coloralo, USA. E-mail address: \texttt{abrams@math.uccs.edu}},
T.\,G.~Nam\footnote{Institute of Mathematics, VAST, 18 Hoang Quoc Viet, Cau Giay, Hanoi, Vietnam. E-mail address: \texttt{tgnam@math.ac.vn}} and N.\,T.~Phuc\footnote{Department of IT and Mathematics Teacher Training, Dong Thap University, Vietnam. E-mail address: \texttt{ntphuc@dthu.edu.vn}    

\ \ \ {\bf Acknowledgements}:   
The first  author is partially supported by a Simons Foundation Collaboration Grants for
Mathematicians Award \#208941;   the second author is supported by Vietnam National Foundation for Science and Technology Development (NAFOSTED);    the third author is supported by project the CS2015.01.28.   The authors  express their deep gratitude to the editor for his 
careful and  professional shepherding  of the original version of this manuscript.   We express our thanks as well to Professor P. N. \'{A}nh of the Alfr\'{e}d R\'{e}nyi
Mathematical Institute, Hungarian Academy of Sciences,  for his valuable suggestions which led to the final shape of the paper.
 } 
\end{center}

\vspace{-.15in}

\begin{abstract} We present a result of P. Ara which establishes that the  Unbounded Generating Number
property is a Morita invariant for unital rings.  Using this, we give necessary and sufficient conditions on a graph $E$ so that the Leavitt path algebra associated to $E$ has UGN.   We conclude by identifying the graphs for which the Leavitt path algebra is (equivalently)  directly finite; stably finite; 
Hermite; and has  cancellation of projectives.
\smallskip

\textbf{Mathematics Subject Classifications}: 16S99, 18G05, 05C25

\textbf{Key words}: Cohn path algebra; Invariant Basis Number; Leavitt path algebra;
Unbounded Generating Number.

\end{abstract}

\section{Introduction}

The concept of projective modules over rings is a generalization of the idea of a
vector space; and their structure theory, in some sense, may
be considered as a generalization of the theorem asserting the existence and
uniqueness of cardinalities of bases for vector spaces. Projective modules play an important role in
different branches of mathematics, in particular, homological algebra and algebraic
K-theory. In general ring theory it is often convenient to impose certain conditions on
the projective modules, either to exclude pathological cases or to ensure better behavior.
For  rings we have the following successively more restrictive cancellation-type conditions  
on the projective (and, in particular, the free) modules:
\begin{enumerate}
\item[(1)] Invariant Basis Number (in short: IBN),
\item[(2)] Unbounded Generating Number (in short: UGN)
\item[(3)] stable finiteness,
\item[(4)] the Hermite property (in P. M. Cohn's sense), and
\item[(5)] cancellation of projectives.
\end{enumerate}
Definitions of these properties are given below.   It is easily verified that each of these conditions is left-right symmetric and each implies the previous condition.  Moreover, in general, all these classes  are distinct.

The conditions (1) - (3) have been well studied in both algebraic and topological settings.  For basic properties of rings with these first three properties we suggest
\cite{c:srotibp}. 
For additional examples of rings satisfying property  (3), see \cite{cs:othtoctc} and the references given there.
By finding conditions for an embedding of a (non-commutative)
ring in a skew field to be possible, P. M. Cohn  discovered the theory of free
ideal rings, in which properties (1) - (5) above play an important role  (see, e.g., \cite{c:firaligr}). We  refer the reader to \cite{c:fhrtsd} and \cite{c:acfartbp}
for an investigation of rings having  (4) and (5), respectively.   It is fair to say that, in general, it is not at all easy to decide whether a given ring has any one of these properties.

Given a (row-finite) directed graph $E$ and field $K$,  Aranda Pino and the first author in \cite{ap:tlpaoag05}, and independently Ara, Moreno, and Pardo in \cite{amp:nktfga},  introduced the 
 \emph{Leavitt
path algebra}  $L_K(E)$.    These Leavitt path algebras generalize the Leavitt algebras $L_K(1, n)$ of
\cite{leav:tmtoar}, and also contain many other interesting classes of algebras.
In addition, Leavitt path algebras are intimately related to graph $C^*$-algebras
(see \cite{r:ga}). In \cite{ag:lpaosg} Ara and Goodearl introduced and investigated the {\it Cohn path algebra}  $C_K(E)$ of $E$ having  coefficients in a field $K$.
Recently, Kanuni and the first author \cite{ak:cpahibn}
have shown that  $C_K(E)$ has IBN for every finite graph $E$.  On the other hand,  as of the writing of this article, it is  an open question to give necessary and sufficient conditions on $E$ so that $L_K(E)$ has IBN.      However,  regarding the remaining four aforementioned properties, we are able to completely classify those graphs $E$ for which   $L_K(E)$ has UGN (Theorem \ref{thm4.10}), as well as classify those graphs $E$ for which $L_K(E)$ satisfies (equivalently) properties (3), (4), and (5) (Theorem \ref{thm5.2}).      We achieve similar results for $C_K(E)$ as well.   

The article is organized as follows.   For the remainder of this introductory section we recall the germane background information.   
In Section \ref{UGNMoritasection} we present Ara's proof  that the Unbounded Generating Number property is a Morita invariant
property in the class of all unital rings (Theorem~\ref{thm 3.8}).  
In Section \ref{LpashaveUGNSection}  we give a necessary and sufficient condition for the Leavitt path
algebra of a finite source-free graph to have Unbounded Generating Number
(Theorem \ref{thm4.5}). Then, by using Theorem~\ref{thm 3.8} and the source elimination process,
we obtain a criterion for the Leavitt path algebra of an arbitrary finite graph
to have Unbounded Generating Number (Theorem \ref{thm4.10}). Consequently, we get
a criterion for the Cohn path algebra of a finite graph to have  Unbounded Generating Number
(Corollary \ref{cor4.11}).   We conclude with  Section \ref{4moreconditionssection}, in which we describe (Theorem \ref{thm5.2}, resp.  Corollary \ref{cor5.4}) those graphs $E$ for which $L_K(E)$ (resp., $C_K(E)$)  
have any one of the (equivalent) aforementioned properties (3) - (5).

\smallskip

Throughout this note, all rings are nonzero, associative with identity and
all modules are unitary.   The set of nonnegative integers is denoted by $\mathbb{N}$, the positive integers by $\mathbb{N}^+$.    

A (directed) graph
$E = (E^0, E^1, s, r)$ (or simply $E = (E^0, E^1)$)
consists of two disjoint sets $E^0$ and $E^1$, called \emph{vertices} and \emph{edges}
respectively, together with two maps $s, r: E^1 \longrightarrow E^0$.  The
vertices $s(e)$ and $r(e)$ are referred to as the \emph{source} and the \emph{range}
of the edge~$e$, respectively. The graph is called \emph{row-finite} if
$|s^{-1}(v)|< \infty$ for all $v\in E^0$. All graphs in this paper will be assumed
to be row-finite. A graph $E$ is \emph{finite} if both sets $E^0$ and $E^1$ are finite.
A vertex~$v$ for which $s^{-1}(v)$ is empty is called a \emph{sink}; a vertex~$v$ for which
$r^{-1}(v)$ is empty is called a \emph{source}; a vertex~$v$ is called  \emph{isolated}
if it is both a source and a sink; and a vertex~$v$ is \emph{regular}
iff $0 < |s^{-1}(v)| < \infty$. A graph $E$ is said to be \emph{source-free} if it has
no sources.

A \emph{path} $p = e_{1} \cdots e_{n}$ in a graph $E$ is a sequence of
edges $e_{1}, \dots, e_{n}$ such that $r(e_{i}) = s(e_{i+1})$ for $i
= 1, \dots, n-1$.  In this case, we say that the path~$p$ starts at
the vertex $s(p) := s(e_{1})$ and ends at the vertex $r(p) :=
r(e_{n})$, and has \emph{length} $|p| := n$. We denote by $p^0$
the set of its vertices, that is, $p^0 = \{s(e_i) \ |\ i = 1,...,n\} \cup \{r(e_n)\}$.
If $p$ is a path in $E$, and if $v= s(p) = r(p)$, then~$p$ is a \emph{closed path
based at} $v$. A closed path $p = e_{1} \cdots e_{n}$ based at~$v$   is
a \emph{closed simple path based at}~$v$ if $s(e_i) \neq v$ for
every $i > 1$. If $p = e_{1} \cdots e_{n}$ is a closed path and all vertices
$s(e_{1}), \dots, s(e_{n})$ are distinct, then the subgraph 
$F = (F^0, F^1)$ of $E$ defined by  $F^0 =  \{s(e_{1}), \dots, s(e_{n})\}, F^1 = \{e_{1}, \dots, e_{n}\}$ 
is called a \emph{cycle}.   A graph $E$ is \emph{acyclic} if it has no cycles.    

For any graph $E= (E^0, E^1)$ and vertices $v,w\in E^0$ we write $v\geq w$ in case $v=w$ or  there exists a path $p$ in $E$ with $s(p) =v$
and $r(p) = w$.  For $v\in E^0$, the set $T(v):= \{w\in E^0\ |\
v\geq w\}$ is the \emph{tree} of $v$.  
(We will denote it by $T_E(v)$ when it is necessary to emphasize the
dependence on the graph $E$.) 

For any finite graph $E= (E^0, E^1)$ we denote by $A_E$ the \emph{incidence matrix} of
$E$. Formally, if $E^0 = \{v_1,..., v_n\}$, then $A_E = (a_{ij})$, the $n\times n$
matrix for which $a_{ij}$ is the number of edges in $E$ having $s(e)=v_i$ and
$r(e)=v_j$. Note that, if $v_i\in E^0$ is a sink (resp., source), then $a_{ij} = 0$ (resp., $a_{ji} = 0$) for all
$j=1,...,n$.

The notion of a Cohn path algebra 
has been defined and investigated by Ara and Goodearl \cite{ag:lpaosg}
(see also \cite{aam:lpa}). 
Specifically, for an arbitrary graph $E = (E^0,E^1,s,r)$
and an arbitrary field $K$, the \emph{Cohn path algebra}
$C_{K}(E)$ {\it of the graph}~$E$ \emph{with coefficients in}~$K$ is the $K$-algebra generated
by the sets $E^0$ and $E^1$, together with a set of variables $\{e^{*}\ |\ e\in E^1\}$,
satisfying the following relations for all $v, w\in E^0$ and $e, f\in E^1$:

\vspace{.05in}
\qquad (1)  $v w = \delta_{v, w} w$;

\qquad (2)  $s(e) e = e = e r(e)$ and $r(e) e^* = e^* = e^*s(e)$;  \ and

\qquad (3)  $e^* f = \delta_{e, f} r(e)$.
\vspace{.05in}

Let $I$ be the ideal of $C_K(E)$ generated by all elements of the form
$v-\sum_{e\in s^{-1}(v)}ee^*$, where $v$ is a regular vertex.
Then the $K$-algebra $C_K(E)/I$ is called the \emph{Leavitt path algebra}
of $E$ with coefficients in $K$, denoted $L_K(E)$.   

Typically the Leavitt path algebra $L_K(E)$ is  defined without reference to Cohn
path algebras,  rather, it is defined  as   the $K$-algebra  generated by
the set $\{v, e, e^*\ |\ v\in E^0, e\in E^1\}$
which satisfies the  above conditions (1), (2), (3), and the additional condition:

\vspace{.05in}
\qquad (4)  $v= \sum_{e\in s^{-1}(v)}ee^*$ for any  regular vertex $v$.
\vspace{.05in}


If the graph $E$ is finite, then both $C_K(E)$
and $L_K(E)$ are unital rings, each having identity $1=\sum_{v\in E^0}v$ (see, e.g.,
\cite[Lemma 1.6]{ap:tlpaoag05}).



\section{Rings having Unbounded Generating Number}\label{UGNMoritasection}
The goal of this section is  to show that the UGN property is a Morita  invariant 
 for unital  rings.

 For many fundamental rings $R$ (e.g., fields and $\mathbb{Z}$), it is well-known  that any two bases for a free right $R$-module necessarily contain the same number of elements;  rephrased, if $R^m \cong R^n$ as right $R$-modules, then $m=n$.  Such rings are said to have the Invariant Basis Number (IBN) property.   On the other hand, in  fundamental work done by  W.G. Leavitt, it is shown (among other things) that, for any pair $(n,N)$ of positive integers with $N>n$, and any field $K$, there exists a $K$-algebra $L_K(n,N)$ for which $R^n \cong R^{N}$.   Germane in this context is the observation that for the graph $R_N$ consisting of  one vertex and $N$ loops, the Leavitt path algebra $L_K(R_N)$ is isomorphic to $L_K(1,N)$.  Additional examples of Leavitt path algebras which lack the IBN property abound.   
 Appropriate in this context is the observation that rings which lack the IBN property fail to have a ``cancellation of projectives":   specifically, if $R^n \cong R^N$ with $n<N$, then $R^n \oplus R^{N-n} \cong R^N \cong R^n \cong R^n \oplus \{0\}$, but obviously $R^{N-n} \not\cong \{0\}$. 
 

There are  natural, well-studied  ``cancellation-type" properties of projective modules over general rings which are stronger than the IBN property.    

\begin{defn}
A ring $R$ is said to have \emph{Unbounded Generating Number} (\emph{UGN} for short) if,
for each positive integer $m$, any set of generators for the free right $R$-module $R^m$ has cardinality $\geq m$.  \hfill $\Box$ 
\end{defn}

Another terminology which has been used for the UGN property is the  ``rank condition" (see, e.g., \cite{hv:ibnarpfr} and \cite[Section 1C]{l:lomar}).    
We note the following easily verified equivalent characterizations of the UGN property.

\begin{rem}\label{rem3.2}
The following conditions are equivalent for any ring $R$:

(1) $R$ has Unbounded Generating Number;



(2) For any pair of positive integers $m$ and $n$, and any right $R$-module $K,$
$R^n \cong R^m \oplus K$ implies that $n\geq m$;

(3) For any $A\in M_{m\times n}(R)$ and $B\in M_{n\times m}(R)$, if $AB= I_m$, then
$n\geq m$.    \hfill $\Box$ 
\end{rem}

\begin{rem}\label{UGNimpliesIBN}
By condition (3) in  Remark~\ref{rem3.2} we see that  the UGN property is indeed a left-right symmetric
condition in general.   
 Moreover, using  condition (2),   it is clear that if $R$ is UGN, then necessarily $R$ is IBN.  We will show in Example \ref{IBNnotUGN}  that the converse is not true, even in the context of Leavitt path algebras.      \hfill $\Box$ 
\end{rem}

\begin{lem}[{cf.~\cite[Proposition 2.4]{c:srotibp}}]\label{lem 3.3} Let $f: R\longrightarrow S$ be
a unital ring homomorphism. If $S$ has Unbounded Generating Number, then so too does $R$.
\end{lem}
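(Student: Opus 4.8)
The plan is to use the characterization of UGN given in condition (3) of Remark~\ref{rem3.2}, since that formulation passes cleanly along ring homomorphisms via matrix functoriality. First I would recall what must be shown: for any positive integers $m,n$ and any matrices $A \in M_{m\times n}(R)$ and $B \in M_{n \times m}(R)$ with $AB = I_m$, we must conclude $n \geq m$. So I would fix such $A$ and $B$ and aim to produce a corresponding pair of matrices over $S$ witnessing the same identity.

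The key observation is that a unital ring homomorphism $f \colon R \longrightarrow S$ induces, for every pair of dimensions, a map $f_* \colon M_{m\times n}(R) \longrightarrow M_{m\times n}(S)$ obtained by applying $f$ entrywise, and that this entrywise application is compatible with matrix multiplication, i.e. $f_*(AB) = f_*(A)f_*(B)$, because each entry of a product is a sum of products of entries and $f$ respects both addition and multiplication. Because $f$ is \emph{unital}, we also have $f_*(I_m) = I_m$ over $S$ (here unitality is essential: it guarantees the diagonal entries map to $1_S$ and hence the identity matrix maps to the identity matrix). Therefore, applying $f_*$ to the relation $AB = I_m$ yields $f_*(A)\,f_*(B) = I_m$ in $M_m(S)$, with $f_*(A) \in M_{m\times n}(S)$ and $f_*(B) \in M_{n\times m}(S)$.

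At this point I would invoke the hypothesis that $S$ has UGN. By condition (3) of Remark~\ref{rem3.2} applied to the pair $\bigl(f_*(A), f_*(B)\bigr)$ over $S$, the identity $f_*(A)f_*(B) = I_m$ forces $n \geq m$. Since $A$ and $B$ were an arbitrary pair over $R$ satisfying $AB = I_m$, this establishes condition (3) for $R$, and hence $R$ has UGN. This completes the argument.

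I do not anticipate a genuine obstacle here, as the proof is essentially formal functoriality of the matrix construction together with a routine unpacking of Remark~\ref{rem3.2}(3). The only point requiring care is the role of unitality: one must verify that $f_*$ sends $I_m$ to $I_m$, which relies on $f(1_R) = 1_S$; without this the induced identity over $S$ could fail, so I would state explicitly where unitality of $f$ is used.
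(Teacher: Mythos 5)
Your proof is correct and is essentially identical to the paper's: both apply $f$ entrywise to the equation $AB = I_m$, using unitality to preserve the identity matrix, and then invoke the characterization of UGN in Remark~\ref{rem3.2}(3) over $S$ to conclude $n \geq m$. Your version merely spells out the functoriality and the role of $f(1_R)=1_S$ more explicitly than the paper does.
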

\begin{proof} If $A\in M_{m\times n}(R)$ and $B\in M_{n\times m}(R)$ are matrices
for which $AB= I_m$, then we get an analogous equation in matrices over $S$
by applying the homomorphism $f$ entrywise, so $n\geq m$ by the UGN property on $S$.
\end{proof}

There are many classes of rings which have Unbounded Generating Number.     For example, any field easily has UGN.  More generally, using Lemma \ref{lem 3.3}, any commutative ring $R$ has UGN:  pick a maximal ideal $M$ of $R$, and consider the natural surjection from $R$ to the field $R/M$.  Additionally, any Hopfian ring $R$ (a ring for which every right module epimorphism $\varphi: R^n \to R^n$ is an isomorphism for each  $n\in \mathbb{N}^+$) is UGN; these include the Noetherian rings and self-injective rings.

The rest of this section is taken up in showing that the UGN property is a Morita invariant
for unital rings. Before doing so, we recall some fundamental concepts, as well as
establish some useful facts.

\begin{defn}\label{monoiddefs}
Let $M$ be an abelian monoid (i.e., $M$ is a set, and $+$ is an associative commutative binary operation on $M,$ with a neutral element).   

(1)  We define a relation $\leq$ on $M$ by setting
$$x\leq y \ \ \mbox{in case there exists }  z\in M  \ \mbox{for which} \ x+ z = y.$$
Then $\leq$ is a preorder (reflexive and transitive).  

(2)   We call an
element $u\in M$ \emph{properly infinite} if $2u \leq u$ in $M.$ It is easy to check
that if $u\leq v$ and $v\leq u$ in $M$, and $u$ is properly infinite, then $v$ is also
properly infinite.

(3) An element $d\in M$ is called an \emph{order-unit} if, for any $x\in M,$  there
exist a positive integer $n$ such that $x\leq nd$.

(4) An order-unit $d\in M$ is said to have \emph{Unbounded Generating Number}
(for short, \emph{UGN}) if, for every pair of positive integers $n, n'$, if
$nd\leq n'd$ in $M,$ then $n\leq n'$.    \hfill $\Box$ 
\end{defn}

For any ring $R$ we denote by $\mathcal{V}(R)$ the set of isomorphism classes
(denoted by $[P]$) of finitely generated projective right $R$-modules, and we endow
$\mathcal{V}(R)$ with the structure of an abelian monoid by imposing the operation:
$$[P] + [Q] = [P\oplus Q]$$ for any isomorphism classes $[P]$ and $[Q]$. By Remark \ref{rem3.2}(2), we  see that the ring $R$ has UGN if and only if the order-unit $[R]$ of $\mathcal{V}(R)$ has UGN.

\begin{lem}\label{lem 3.6}
Let $M$ be an abelian monoid and $\mu \in M.$ Then $\mu$ does not have Unbounded Generating
Number if and only if $n\mu$ is properly infinite for some positive integer $n$.
\end{lem}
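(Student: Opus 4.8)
The plan is to unwind both directions directly from the definitions, reading ``$\mu$ has UGN'' as the natural extension of Definition \ref{monoiddefs}(4) to an arbitrary element: for all positive integers $n,n'$, the relation $n\mu\leq n'\mu$ forces $n\leq n'$. Thus $\mu$ \emph{fails} UGN precisely when there exist positive integers $n,n'$ with $n\mu\leq n'\mu$ yet $n>n'$. With this phrasing the reverse implication is nearly immediate: if $k\mu$ is properly infinite for some $k\geq 1$, then $2k\mu\leq k\mu$ by Definition \ref{monoiddefs}(2), and since $2k>k$ this single inequality already witnesses the failure of UGN for $\mu$.

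For the forward implication I would begin from a witness $n\mu\leq n'\mu$ with $n>n'$ and bootstrap it into a proper-infiniteness relation. The key tool is that the preorder $\leq$ is compatible with addition: if $x\leq y$ in $M$ then $x+w\leq y+w$ for every $w\in M$, which follows at once from Definition \ref{monoiddefs}(1), since $y=x+z$ gives $y+w=(x+w)+z$. Using $n\geq n'+1$, I would first observe $(n'+1)\mu\leq n\mu\leq n'\mu$, so after setting $m:=n'\geq 1$ it suffices to treat the cleaner inequality $(m+1)\mu\leq m\mu$.

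Starting from $(m+1)\mu\leq m\mu$, I would iterate: adding $j\mu$ to both sides of this base inequality gives $(m+j+1)\mu\leq(m+j)\mu$ for each $j\geq 0$, and chaining these together by transitivity yields $(m+j)\mu\leq m\mu$ for all $j\geq 0$, by a short induction on $j$. Taking $j=m$ then produces $2m\mu\leq m\mu$, that is, $m\mu$ is properly infinite, which is exactly the desired conclusion with the positive integer $m$.

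I expect no genuine obstacle here, as the statement is elementary once the bootstrapping idea is in hand. The one point requiring a little care is the inductive step of the forward direction, where one must keep the two families of inequalities $(m+j+1)\mu\leq(m+j)\mu$ (obtained by adding $\mu$ to the base case) and $(m+j)\mu\leq m\mu$ (the induction hypothesis) distinct and combine them correctly via transitivity, together with the observation that the choice $j=m$ is precisely what makes the coefficient double.
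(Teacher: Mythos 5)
Your proof is correct and takes essentially the same approach as the paper: both directions hinge on iterating the failing inequality $n\mu\leq n'\mu$ (with $n>n'$) via additivity and transitivity of the preorder until the coefficient gap doubles, yielding $2m\mu\leq m\mu$ for the smaller integer $m=n'$. Your normalization to the gap-one inequality $(m+1)\mu\leq m\mu$ is merely cleaner bookkeeping than the paper's explicit tracking of the witness element $x$ in $m\mu=(m+tk)\mu+tx$; the underlying amplification argument is identical.
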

\begin{proof}   Assume that $\mu$ does not have UGN, that is, there
exist two positive integers $m, n$ such that $m > n$ and $m\mu +x = n\mu$ for some $x\in M.$
Set $k:= m- n>0$. We then have that $$m\mu = n\mu + k\mu = (m\mu + x) + k\mu = m\mu + (k\mu + x),$$  
which by substituting gives $m\mu = (m\mu + (k\mu + x)) + (k\mu + x) = m\mu + 2k\mu + 2x$, which by an easy
induction gives 
$m\mu = (m+ tk)\mu + tx $ for all $t\in \mathbb{N}$.  Then adding $x$ to both sides yields $n\mu = m\mu + x = (m+ tk)\mu + (t+1)x$ for all  $t\in \mathbb{N}^+$.  In particular, $(m+tk)\mu \leq n\mu$ for all  $t\in \mathbb{N}^+$.  So, without loss of
generality, we may assume that $m\geq 2n$. But then \[2n\mu + (m-2n)\mu +x =
m\mu +x = n\mu,\] that is, $2n\mu\leq n\mu$. Therefore, $n\mu$ is properly infinite.

The converse is obvious.  
\end{proof}

\begin{prop}\label{prop 3.7}
Let $M$ be an abelian monoid.  Let  $d_1$ and $d_2$ be  order-units in $M$. Then 
$d_1$ has Unbounded Generating Number if and only if so does $d_2$.
\end{prop}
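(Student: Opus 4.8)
The plan is to reduce everything to the properly-infinite criterion of Lemma~\ref{lem 3.6}. That lemma says an element fails to have UGN precisely when some positive-integer multiple of it is properly infinite. Since the asserted conclusion is symmetric in $d_1$ and $d_2$, it suffices to prove the single implication: if $d_1$ does not have UGN, then neither does $d_2$. Interchanging the roles of $d_1$ and $d_2$ then yields the reverse implication, and together with Lemma~\ref{lem 3.6} this gives the stated equivalence.

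The one preliminary fact I would record first is that a properly infinite element absorbs its own multiples: if $u$ is properly infinite, then $mu \leq u$ for every positive integer $m$. This follows by induction from $2u \leq u$, using that the preorder $\leq$ of Definition~\ref{monoiddefs}(1) is translation-invariant (if $a \leq b$ then $a+c \leq b+c$, and hence $ka \leq kb$ for every $k \in \mathbb{N}^+$); adding $u$ to $2u \leq u$ gives $3u \leq 2u \leq u$, and so on.

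Now assume $d_1$ does not have UGN. By Lemma~\ref{lem 3.6} there is an $n \in \mathbb{N}^+$ with $u := n d_1$ properly infinite. Because $d_2$ is an order-unit, I may choose $p \in \mathbb{N}^+$ with $d_1 \leq p d_2$; because $d_1$ is an order-unit, I may choose $q \in \mathbb{N}^+$ with $d_2 \leq q d_1$. Setting $w := np\, d_2$, these two comparisons sandwich $w$ between $u$ and a multiple of $u$:
\[
u = n d_1 \leq np\, d_2 = w \leq npq\, d_1 = (pq)\, u .
\]
Combining this sandwich with the absorption fact gives $2w \leq 2pq\, u \leq u \leq w$, so $w$ is properly infinite. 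Since $w = np\, d_2$ is a positive-integer multiple of $d_2$, Lemma~\ref{lem 3.6} shows that $d_2$ does not have UGN, completing the implication.

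The crux of the argument, and the step I expect to require the most care, is exactly the sandwich $u \leq w \leq (pq)u$ together with the absorption property: it is the mutual comparability of $d_1$ and $d_2$ as order-units that lets a properly infinite multiple of one force a properly infinite multiple of the other. One could instead route the final deduction through Definition~\ref{monoiddefs}(2), applied to $u$ and $v := (pq)u$ (both of which are properly infinite and satisfy $u \leq v \leq u$), but the direct computation $2w \leq u \leq w$ seems cleanest and avoids appealing to the comparability clause twice.
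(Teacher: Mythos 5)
Your proof is correct, and it follows the same overall architecture as the paper's: both arguments run through Lemma~\ref{lem 3.6} in both directions, and both use the two order-unit hypotheses to sandwich a positive multiple of $d_2$ between $u = nd_1$ and a larger multiple of $d_1$. Where you differ is in the middle step. The paper takes an arbitrary bound $v = \ell d_2 \leq k d_1$ and must then descend from $kd_1$ to $nd_1$ by hand: it splits into the cases $k \leq 2n$ and $k > 2n$, and in the latter case chooses the minimal $t$ with $0 < k - tn \leq 2n$ and runs an induction $kd_1 \leq (k-n)d_1 \leq \cdots \leq (k-tn)d_1 \leq 2nd_1 \leq nd_1$; having established $u \leq v$ and $v \leq u$, it then invokes the observation recorded in Definition~\ref{monoiddefs}(2) that proper infiniteness passes between mutually comparable elements. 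You instead isolate the absorption fact that a properly infinite $u$ satisfies $mu \leq u$ for every $m \in \mathbb{N}^+$ (a clean induction from $2u \leq u$ using translation-invariance of $\leq$), and you arrange the upper bound to be a multiple of $u$ itself by taking $w := np\,d_2$, so that $u \leq w \leq (pq)u$; the descent then comes for free and you verify $2w \leq 2pq\,u \leq u \leq w$ directly, bypassing both the paper's case split and its appeal to the Definition~\ref{monoiddefs}(2) remark. The trade-off is mild but real: your absorption lemma is a reusable general fact that collapses the fiddly part of the paper's computation to one line, while the paper's version keeps everything inside the proof at the cost of the minimal-$t$ bookkeeping. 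Both proofs are complete and correct.
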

\begin{proof} Assume that $d_1$ does not have UGN; we show that the same holds for $d_2$ as well. By Lemma~\ref{lem 3.6},
there exists a positive integer $n$ such that $nd_1$ is properly infinite,
i.e., $2nd_1\leq nd_1$. Since $d_2$ is an order-unit in $M,$  there exists a positive integer
$\ell$ such that $u := nd_1 \leq \ell d_2=: v$. Furthermore, as $d_1$ is an order-unit in $M$,
there exists a positive integer $k$ such that $v\leq kd_1$.

We show now that $v\leq u$. If $k\leq 2n$, then we have that $v\leq kd_1 \leq
2nd_1\leq nd_1 = u$. Otherwise, let  $t$ be the minimum positive integer for which $0< k-tn\leq 2n$.
But then
$$ kd_1   =   2nd_1 + (k-2n)d_1  \leq  nd_1 + (k-2n)d_1  =   (k-n)d_1, $$
which similarly gives 
$$ (k-n)d_1   =   2nd_1 + (k-3n)d_1  \leq  nd_1 + (k-3n)d_1   =   (k-2n)d_1, $$
which then  by induction and the transitivity of $\leq$ gives
$ kd_1  \leq (k-tn)d_1$.   But then we have
$v \leq kd_1 \leq (k-tn)d_1  \leq 2nd_1 \leq nd_1 = u.$




So we have $u\leq v$ and $v\leq u$.   From these observations and the assumption that $u$ is properly infinite, we conclude by the  observation made in Definition \ref{monoiddefs}(2)  that $v = \ell d_2$ is also properly
infinite. Therefore, $d_2$ does not have UGN, by  Lemma~\ref{lem 3.6}.
\end{proof}

It is known that the IBN property is not a Morita invariant property for rings (see, e.g.,
\cite[Exercise 11, page 502]{l:lomar}; such examples where both of the  rings are  Leavitt
path algebras can be constructed as well).   
In contrast, we now present the main result of this section.  

\begin{thm}\label{thm 3.8}
Let $R$ and $S$ be Morita equivalent unital rings. Then $R$ and $S$ have Unbounded Generating
Number simultaneously.
\end{thm}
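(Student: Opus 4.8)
The plan is to translate the UGN property into a statement about order-units in the monoid $\mathcal{V}(-)$, and then exploit the fact that Morita equivalent rings have isomorphic such monoids. Recall from the discussion following Definition~\ref{monoiddefs} that, by Remark~\ref{rem3.2}(2), a unital ring $R$ has UGN if and only if the order-unit $[R]$ of the abelian monoid $\mathcal{V}(R)$ has UGN in the sense of Definition~\ref{monoiddefs}(4); the same holds for $S$ with $[S]\in\mathcal{V}(S)$. Thus it suffices to compare the order-units $[R]$ and $[S]$ across the two monoids.

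First I would invoke the standard fact that a Morita equivalence between $R$ and $S$ induces an equivalence between the categories of right modules which restricts to an equivalence between the full subcategories of finitely generated projective modules. Since such an equivalence preserves direct sums and isomorphism classes, it yields an isomorphism of abelian monoids $\Phi\colon\mathcal{V}(R)\xrightarrow{\ \cong\ }\mathcal{V}(S)$ satisfying $\Phi([P\oplus Q])=\Phi([P])+\Phi([Q])$. Because $\Phi$ is a monoid isomorphism, it preserves the preorder $\leq$ of Definition~\ref{monoiddefs}(1), carries properly infinite elements to properly infinite elements, and carries order-units to order-units.

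Now I would assemble the chain of equivalences. Set $d:=\Phi([R])\in\mathcal{V}(S)$. Since $[R]$ is an order-unit in $\mathcal{V}(R)$ and $\Phi$ is an isomorphism, $d$ is an order-unit in $\mathcal{V}(S)$. Moreover, as $\Phi$ preserves the relation $\leq$ and commutes with the monoid operation, the order-unit $[R]$ of $\mathcal{V}(R)$ has UGN if and only if $d$ has UGN in $\mathcal{V}(S)$ (one may also phrase this through Lemma~\ref{lem 3.6}, since $\Phi$ sends $n[R]$ to $nd$ and preserves proper infiniteness). The key point is that $\Phi$ need \emph{not} send $[R]$ to $[S]$; the module realizing $d$ is merely some finitely generated projective $S$-module, not necessarily $S$ itself. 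This is precisely where Proposition~\ref{prop 3.7} enters: the two order-units $d$ and $[S]$ of the single monoid $\mathcal{V}(S)$ have UGN simultaneously. Putting everything together,
\[
R \text{ has UGN} \iff [R] \text{ has UGN} \iff d \text{ has UGN} \iff [S] \text{ has UGN} \iff S \text{ has UGN},
\]
which is the desired conclusion.

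The step I expect to be the main obstacle --- or at least the one requiring the most care --- is the passage from $d=\Phi([R])$ to the canonical order-unit $[S]$, i.e.,\ overcoming the fact that a Morita equivalence need not preserve the distinguished free module of rank one. Without Proposition~\ref{prop 3.7}, the monoid isomorphism alone would only tell us that $R$ has UGN if and only if the (possibly non-free) module $d$ has UGN as an order-unit, which is not a priori the UGN property of $S$. The remaining ingredient --- verifying that a Morita equivalence produces a monoid isomorphism of $\mathcal{V}(R)$ and $\mathcal{V}(S)$ respecting both the additive and the order structure --- is standard, though it must be stated explicitly so that the order-unit and proper-infiniteness language of the preceding lemmas applies verbatim.
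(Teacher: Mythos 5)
Your proposal is correct and follows essentially the same route as the paper: both pass to the monoid isomorphism $\mathcal{V}(R)\cong\mathcal{V}(S)$ induced by the Morita equivalence, note that order-units and the UGN property of an element are monoid-isomorphism invariants, and then use Proposition~\ref{prop 3.7} to move from the image order-unit $\Phi([R])$ to the canonical order-unit $[S]$. You even isolate the same crux the paper's proof rests on, namely that the equivalence need not carry $[R]$ to $[S]$, so nothing further is needed.
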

\begin{proof} 
Let $\Phi : Mod-R  \rightarrow Mod-S$ be the presumed equivalence of categories.   Then the restriction $\varphi = \Phi |_{\mathcal{V}(R)} : \mathcal{V}(R) \rightarrow \mathcal{V}(S)$ is a monoid isomorphism.   Since a monoid isomorphism clearly takes order-units to order-units, and  the UGN property of an element in a monoid is  a monoid-isomorphism invariant, we see that if $[R]$ has UGN in $\mathcal{V}(R)$, then  $\varphi([R])$ is an order-unit in $\mathcal{V}(S)$ having  UGN.   But then by Proposition \ref{prop 3.7} we get that the order-unit $[S]$ of $\mathcal{V}(S)$   has UGN as well.   
\end{proof}

\begin{rem}\label{historyofUGNresult}
Using the {\it separative} property of $\mathcal{V}(L_K(E))$ established in \cite{amp:nktfga} (for any finite graph $E$), we  had been able to fairly easily verify that the UGN property is a Morita invariant within the class of unital Leavitt path algebras.  (This property was of sufficient strength to allow us to use it  to achieve  the original proof of our main result, Theorem \ref{thm4.10}.)    Subsequently, when informed about this property of Leavitt path algebras,  P. Ara realized that such Morita invariance indeed holds for {\it all} unital rings.  We thank him for allowing us to use his proof of this more general property in our article; it has been  presented here as Lemma \ref{lem 3.6}, Proposition \ref{prop 3.7}, and Theorem \ref{thm 3.8}.   We note that Theorem \ref{thm 3.8} answers \cite[Problem 5.2]{hv:ibnarpfr}.\footnote{It is interesting to note also that the following question appears as an Exercise  in Section 0.1 of Cohn's book  \cite{c:firaligr}:     

  \ \ \ \ \ \ \ \ 9$^*$.  {\it Which of IBN, UGN, and weak finiteness (if any) are Morita invariants?}   \\ We know of no place in the literature where a solution to the UGN portion of the question appears.  The asterisk ${}^*$ indicates that Cohn viewed this as a ``harder" question;  however, it was not considered an ``open" question (which would have instead merited a ${}^\circ$ designation).}       \hfill $\Box$ 
\end{rem}

%
%

\section{Leavitt path algebras having Unbounded Generating Number}\label{LpashaveUGNSection}
In this section we establish the main result of the article, to wit, we  give necessary and sufficient conditions for the Leavitt path algebra $L_K(E)$  of
a finite graph $E$ with coefficients in a field $K$  to have  Unbounded Generating Number.

Following \cite{amp:nktfga}, for any directed graph
$E=(E^0, E^1, s, r)$ we define the monoid $M_E$ as follows. 

\begin{defn}\label{Tdefn}
We denote by $Y_E$ (or simply by $Y$, if the graph $E$ is clear) the free abelian
monoid (written additively) with generators $E^0$. Relations are defined on $Y_E$ by setting

\medskip

$\hspace{1.75in}  v = \sum_{e\in s^{-1}(v)}r(e)  \hfill ({\rm M})$ 

\medskip
\noindent
for every regular vertex $v\in E^0$.
Let $\sim_{E}$ be the congruence relation on $Y_E$ generated by these relations.
Then $M_E$ is defined to be the  monoid $ Y_E/{\sim_E}$.   The elements of $M_E$ are usually denoted by $[x]$,  for $x\in Y_E$.     \hfill $\Box$ 

\end{defn}

In the literature the generators of $Y$ are sometimes denoted $\{a_v \ | \ v\in E^0\}$ (rather than by $E^0$ itself)  to indicate that $M_E$ is not being viewed as any sort of quotient of elements of $L_K(E)$; we have chosen to use the less cumbersome of the two notations.    Alternatively, $Y$ may be viewed as $\mathbb{N}^{|E^0|}$, where $E^0 = \{v_1, v_2, \dots, v_n\}$, and $v_i$ is associated with the $i^{th}$ standard basis vector in  $\mathbb{N}^{|E^0|}$ for each $1\leq i \leq n$.  
\begin{exas}\label{monoidexamples}  We identify the monoid $M_E$ for some important classes of graphs. 

\medskip

(1)    For each $n\in \mathbb{N}^+$, let $A_n =  \xymatrix{   \bullet^{v_1} \ar[r] & \bullet^{v_2} \ar[r] & \cdots \  \   \bullet^{v_{n-1}} \ar[r] & \bullet^{v_{n}}  } $.   Then in $M_{A_n}$ we have $[v_1] = [v_2] = \cdots = [v_n]$, and $M_{A_n} = \{j [v_n] \ | \ j \in \mathbb{N}\} \cong \mathbb{N}.$

\medskip

(2)    For each $n\in \mathbb{N}^+$ let $C_n$ be the ``single cycle graph of $n$ vertices", with vertices labelled $v_1, v_2, \dots, v_n$.     Then in $M_{C_n}$ we have, as in the previous example,  $[v_1] = \cdots = [v_n]$, and $M_{C_n} = \{j [v_n] \ | \ j \in \mathbb{N}\} \cong \mathbb{N}.$ 

\medskip

(3)  For each integer  $n \geq 2$  let

 $$R_n = \xymatrix{ & {\bullet^v} \ar@(ur,dr)  \ar@(u,r) \ar@(ul,ur)  \ar@{.} @(l,u) \ar@{.} @(dr,dl)
\ar@(r,d)  \ar@{}[l] ^{\hdots} } \ \ \ \ \  .$$   

\bigskip
\noindent
($R_n$ is the ``rose with $n$ petals" graph; it is central to the theory of Leavitt path algebras, as $L_K(R_n) \cong L_K(1,n)$, the aforementioned Leavitt algebra of order $n$.)   Then $M_{R_n} = \{0,1[v],2[v],\dots,(n-1)[v]\}$, where $n[v] = [v]$.  

\medskip

(4)   The Toeplitz graph is the graph  $$ \mathcal{T} = \ \ \  \ \  \xymatrix{  \bullet^v   \ar@(dl,ul)  \ar[r]        &\bullet^w  } \ \ \ .$$   Then $M_\mathcal{T} = \{n[v] + n'[w] \ | \ n,n'\in  \mathbb{N}$, and $[v] = [v+w]\}$.   \hfill $\Box$

\end{exas}

 In \cite{amp:nktfga} Ara, Moreno and Pardo establish the following fundamental result.

\begin{thm}[{\cite[Theorem 3.5]{amp:nktfga}}]\label{thm4.1}
Let $E = (E^0, E^1)$ be a row-finite graph and $K$ any field. Then the map
$[v]\longmapsto [vL_{K}(E)]$ yields an isomorphism of abelian monoids $M_E\cong \mathcal{V}(L_{K}(E))$.
In particular, under this isomorphism, we have $[\sum_{v\in E^0}v]\longmapsto [L_{K}(E)]$.
\end{thm}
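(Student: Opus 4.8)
The plan is to exhibit the assignment $[v]\mapsto[vL_K(E)]$ as a monoid isomorphism $\phi\colon M_E\to\mathcal{V}(L_K(E))$ in three stages: well-definedness as a homomorphism, surjectivity, and injectivity. First I would define $\phi$ on the free monoid $Y_E$ by $v\mapsto[vL_K(E)]$ and extend additively, then check that the relations (M) are respected so that $\phi$ descends to $M_E$. For a regular vertex $v$, relation (4) gives $v=\sum_{e\in s^{-1}(v)}ee^*$, while relations (2) and (3) yield $ee^*\,ff^*=\delta_{e,f}\,ee^*$, so the $ee^*$ with $e\in s^{-1}(v)$ are mutually orthogonal idempotents and $vL_K(E)=\bigoplus_{e\in s^{-1}(v)}ee^*L_K(E)$ as right modules. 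Since $e^*e=r(e)$, left multiplication by $e$ and by $e^*$ are mutually inverse module isomorphisms between $r(e)L_K(E)$ and $ee^*L_K(E)$; hence $[vL_K(E)]=\sum_{e\in s^{-1}(v)}[r(e)L_K(E)]$, which is exactly the $\phi$-image of relation (M). Thus $\phi$ is a well-defined monoid homomorphism.

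Next I would prove surjectivity using that $L_K(E)$ is a ring with local units $1_F=\sum_{v\in F}v$ (for finite $F\subseteq E^0$) and decomposes as $L_K(E)=\bigoplus_{v\in E^0}vL_K(E)$. Every finitely generated projective right module is a direct summand of some $(1_FL_K(E))^n=\big(\bigoplus_{v\in F}vL_K(E)\big)^n$, so its class lies in the submonoid generated by the $[vL_K(E)]$, which is the image of $\phi$. The final (``in particular'') assertion then requires $E^0$ finite, so that $1=\sum_{v\in E^0}v$ exists and $L_K(E)=\bigoplus_{v\in E^0}vL_K(E)$; additivity of $\phi$ gives $\phi\big([\sum_{v\in E^0}v]\big)=\sum_{v\in E^0}[vL_K(E)]=[L_K(E)]$.

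The main obstacle is injectivity, since $M_E$ is given by an abstract presentation while $\mathcal{V}(L_K(E))$ is defined through modules, and I must rule out any isomorphism of projectives beyond those forced by relation (M). My plan is to reduce to finite graphs by writing a row-finite $E$ as a directed union of finite complete subgraphs, realizing both $M_E$ and $\mathcal{V}(L_K(E))$ as the resulting direct limits and $\phi$ as the limit of the finite-stage maps. Within the finite case I would first settle the acyclic graphs: there the one-step moves $v\mapsto\sum_{e\in s^{-1}(v)}r(e)$ form a terminating, confluent rewriting system whose normal forms are supported on sinks, so $M_E$ is free abelian on the sinks; and correspondingly $L_K(E)$ is a finite direct sum of full matrix algebras over $K$ indexed by the sinks, whence $\mathcal{V}(L_K(E))$ is also free abelian on the sink classes and $\phi$ is plainly bijective. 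The genuinely delicate point is the presence of cycles, which breaks termination of the rewriting and produces relations such as $n[v]=[v]$ (as in Examples \ref{monoidexamples}); to conclude that $\phi$ identifies nothing further I expect to analyse the (properly infinite) corners of $L_K(E)$ attached to the cycles and to match them against the refinement structure of $M_E$, comparing module-theoretic computations with the monoid relations vertex by vertex.
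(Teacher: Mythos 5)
You should first be aware that the paper does not prove this statement at all: it is quoted verbatim from Ara--Moreno--Pardo \cite{amp:nktfga} (their Theorem 3.5), so the benchmark is that paper's proof, which runs as follows. For a finite graph, $L_K(E)$ is realized as a Bergman algebra: one starts from the semisimple algebra $K^{E^0}$ and successively adjoins a \emph{universal} isomorphism between the projectives $vA$ and $\bigoplus_{e\in s^{-1}(v)}r(e)A$ for each regular vertex $v$; Bergman's coproduct theorem (``Coproducts and some universal ring constructions,'' Theorem 5.2) then computes $\mathcal{V}$ of the resulting algebra as precisely the monoid presented by those generators and relations, i.e.\ $M_E$ --- this yields generation and the absence of extra relations \emph{simultaneously}. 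The row-finite case follows by writing $E$ as a direct limit of finite complete subgraphs and using continuity of both $M_{(-)}$ and $\mathcal{V}$. Your stage one (well-definedness of $\phi$, via $vL_K(E)=\bigoplus_{e\in s^{-1}(v)}ee^*L_K(E)$ and $ee^*L_K(E)\cong r(e)L_K(E)$) is correct and is exactly the easy half, and your direct-limit reduction matches \cite{amp:nktfga}; but the heart of the theorem is missing from your plan.

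Two concrete gaps. First, your surjectivity argument fails as stated: from $P$ being a direct summand of $\bigl(\bigoplus_{v\in F}vL_K(E)\bigr)^n$ you get only $[P]+[Q]=\sum_v n_v[vL_K(E)]$, which places $[P]$ \emph{below} an element of the image of $\phi$, not in the submonoid generated by the $[vL_K(E)]$; compare any ring with a non-free finitely generated projective $P$, which is a summand of a free module while $[P]$ is not a multiple of $[R]$. Showing that every class actually lies in the image is part of what Bergman's theorem delivers and cannot be had for free. Second, injectivity in the presence of cycles --- which you yourself flag as ``the genuinely delicate point'' --- is only gestured at (``I expect to analyse the properly infinite corners\dots''), with no actual argument. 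This is not a routine verification: already for the rose $R_n$ it is exactly Leavitt's theorem that $L_K(1,n)^i\cong L_K(1,n)^j$ forces $i\equiv j \pmod{n-1}$, a nontrivial result requiring its own proof, and the general case demands ruling out \emph{all} isomorphisms of projectives beyond those forced by relation (M). Without Bergman's universal machinery (or an equivalent nonstable K-theory computation), your plan amounts to reproving that machinery by hand, and no viable route for doing so is given; your acyclic-case analysis, while correct, does not touch the difficulty.
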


Applying Theorem~\ref{thm4.1} and Remark~\ref{rem3.2}(2), we immediately get the following corollary, which
provides us with a criterion to check the UGN property of  $L_K(E)$  in terms of the monoid $M_E$.

\begin{cor}\label{cor4.2}
Let $E = (E^0, E^1)$ be a finite graph and $K$ any field.
Then the following  are equivalent:

(1) $L_K(E)$ has Unbounded Generating Number.


(2) For any pair of positive integers $m$ and $n$, and any $[x]\in M_E$,
$$\mbox{if} \ \ m[\Sigma_{v\in E^0}v] +\ [x] = n[\Sigma_{v\in E^0}v] \ \mbox{ in } M_E, \mbox{ then } m\leq n.$$
\end{cor}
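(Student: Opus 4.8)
The plan is to transport the module-theoretic characterization of UGN recorded in Remark~\ref{rem3.2}(2) directly across the monoid isomorphism $M_E \cong \mathcal{V}(L_K(E))$ supplied by Theorem~\ref{thm4.1}. Write $R = L_K(E)$, which is a unital ring since $E$ is finite. First I would recall that, by Remark~\ref{rem3.2}(2), $R$ has UGN precisely when, for all $m, n \in \mathbb{N}^+$ and every right $R$-module $K$, an isomorphism $R^n \cong R^m \oplus K$ forces $n \geq m$.

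Next I would observe that any such $K$ is automatically finitely generated and projective, being a direct summand of the free module $R^n$; hence $[K]$ is a well-defined element of $\mathcal{V}(R)$. Since $R^n$ and $R^m \oplus K$ are both finitely generated projective, the isomorphism $R^n \cong R^m \oplus K$ holds if and only if the corresponding classes coincide in $\mathcal{V}(R)$, that is, if and only if $n[R] = m[R] + [K]$. Thus the UGN condition for $R$ reads: for all $m, n \in \mathbb{N}^+$ and every $[K] \in \mathcal{V}(R)$, the equation $m[R] + [K] = n[R]$ in $\mathcal{V}(R)$ implies $m \leq n$.

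Finally I would invoke Theorem~\ref{thm4.1}, which gives a monoid isomorphism $\varphi : M_E \to \mathcal{V}(R)$, $[v] \mapsto [vL_K(E)]$, carrying $[\Sigma_{v\in E^0}v]$ to $[R]$. Because $\varphi$ is a bijective monoid homomorphism, it both preserves and reflects equations, and its image is all of $\mathcal{V}(R)$; in particular, as $[x]$ ranges over $M_E$, the image $\varphi([x])$ ranges over every element of $\mathcal{V}(R)$. Applying $\varphi$ (and $\varphi^{-1}$) therefore identifies $m[R] + [K] = n[R]$ in $\mathcal{V}(R)$ with $m[\Sigma_{v\in E^0}v] + [x] = n[\Sigma_{v\in E^0}v]$ in $M_E$, so the UGN condition over $\mathcal{V}(R)$ pulls back verbatim to condition (2) over $M_E$, with $[x]$ in the role of $[K]$. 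This yields the desired equivalence.

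The only point that genuinely demands attention—and the closest thing to an obstacle—is matching the quantifiers: Remark~\ref{rem3.2}(2) ranges over \emph{arbitrary} right $R$-modules $K$, whereas condition (2) ranges over elements $[x]$ of $M_E$. Reconciling these rests on the observation that the relevant $K$ are exactly the finitely generated projective modules, whose isomorphism classes are precisely the elements of $\mathcal{V}(R) \cong M_E$. Once this is noted, the remainder is a purely formal transport of structure requiring no computation, which is why the statement is recorded as an immediate corollary.
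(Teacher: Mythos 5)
Your proposal is correct and follows exactly the route the paper intends: the paper derives Corollary~\ref{cor4.2} by combining Remark~\ref{rem3.2}(2) with the monoid isomorphism $M_E \cong \mathcal{V}(L_K(E))$ of Theorem~\ref{thm4.1}, which carries $[\Sigma_{v\in E^0}v]$ to $[L_K(E)]$. Your care in noting that the module $K$ in Remark~\ref{rem3.2}(2) is automatically finitely generated projective (being a direct summand of $R^n$), so that its class lies in $\mathcal{V}(R)$ and the quantifiers match, is precisely the detail the paper leaves implicit in calling the corollary immediate.
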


As usual,   $(\ )^t$ notation denotes the standard  transpose of a matrix.  Also, for  matrices $A=(a_{ij})$
and $B=(b_{ij})\in M_{m\times n}(\mathbb{Z})$, $A\leq B$ means  that $a_{ij}\leq
b_{ij}$ for all $i = 1,..., m$ and $j = 1, ..., n$.     

\begin{defn}\label{sourcecycledef}
Let $c$ be a cycle in the graph $E$.   We call $c$ a {\it source cycle} in case $|r^{-1}(v)| = 1$ for all $v\in c^0.$    \hfill $\Box$ 
\end{defn}

We will utilize heavily the following graph-theoretic result.

\begin{lem}\label{graphtheorylemma}
Let $E$ be a finite source-free graph  for which no cycle is a source cycle.   
Then there exists a vertex $v \in E^0$ for which there are two distinct cycles based at $v$, and for which  $|r^{-1}(v)|\geq 2$.
\end{lem}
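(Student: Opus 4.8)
The plan is to analyze $E$ through its decomposition into strongly connected components, using the preorder $\geq$ already defined on $E^0$ (so that $v,w$ lie in the same component iff $v\geq w$ and $w\geq v$), and then to work inside a single component, where strong connectivity lets me convert a ``spare'' incoming edge into a genuine second cycle. For a strongly connected component $S$ and a vertex $v\in S$, I would write $d_S(v)$ for the number of edges $e$ with $r(e)=v$ and $s(e)\in S$, i.e.\ the in-degree of $v$ computed \emph{inside} $S$; note $d_S(v)\leq |r^{-1}(v)|$, and that a \emph{nontrivial} component (one containing an edge, equivalently a cycle) has $d_S(v)\geq 1$ for every $v\in S$. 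The whole argument then splits according to whether some nontrivial component contains a vertex with $d_S(v)\geq 2$.

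First I would dispose of the productive case. Suppose some nontrivial component $S$ has a vertex $v$ with $d_S(v)\geq 2$, witnessed by two distinct edges $e_1\neq e_2$ with $r(e_i)=v$ and $a_i:=s(e_i)\in S$. Since $S$ is strongly connected, for each $i$ there is a simple path $q_i$ from $v$ to $a_i$ inside $S$, and then $q_ie_i$ is a cycle based at $v$. The only edge of the cycle $q_ie_i$ whose range is $v$ is $e_i$, so $e_1\neq e_2$ forces these two cycles to be distinct. As $d_S(v)\geq 2$ gives $|r^{-1}(v)|\geq 2$, this $v$ meets both requirements and the lemma holds.

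It remains to rule out the complementary case, and this is where the hypotheses enter. Assume instead that every nontrivial component $S$ satisfies $d_S(v)=1$ for all $v\in S$. Counting internal edges gives $\sum_{v\in S}d_S(v)=|S|$ edges inside $S$; since strong connectivity forces internal out-degree at least $1$ at each vertex, every internal out-degree must then equal $1$ as well, and an in-degree-one out-degree-one strongly connected graph is a single simple cycle. Now I pass to the condensation DAG of $E$ and choose a source component $S_0$ (no incoming edges from other components). Because $E$ is source-free, $S_0$ cannot be a lone vertex without a loop, so $S_0$ is nontrivial, hence a single cycle; and, receiving no edges from outside, each $v\in S_0$ has $|r^{-1}(v)|=d_{S_0}(v)=1$. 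Thus $S_0$ is a source cycle, contradicting the hypothesis that $E$ has no source cycle. Hence this case is impossible and the productive case above must occur.

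The main obstacles are the two structural claims rather than any computation: that in the first case the two closed paths $q_1e_1,q_2e_2$ are honestly \emph{distinct} cycles (handled by observing they differ in the unique edge entering $v$), and that in the second case ``all internal in-degrees equal $1$'' upgrades, via strong connectivity together with the edge count, to ``the component is a single cycle.'' The one step where source-freeness is indispensable is the claim that a condensation source is nontrivial; without it the source component could be a single vertex with no incoming edges at all (a global source), and no source cycle would be produced.
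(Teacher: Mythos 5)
Your proof is correct, but it proceeds by a genuinely different route from the paper's. You decompose $E$ into strongly connected components and argue structurally: if some nontrivial component has a vertex of internal in-degree $\geq 2$, strong connectivity supplies two simple paths back to the sources of the two entering edges, and the resulting closed paths are honest cycles (simplicity of the paths gives distinct source vertices) that are distinct because they differ in the unique edge of each cycle with range $v$; otherwise a counting argument (sum of internal in-degrees equals the number of internal edges, so all internal out-degrees are forced to $1$ as well) shows every nontrivial component is a single cycle, and a source component of the condensation DAG --- nontrivial precisely because $E$ is source-free --- is then a source cycle, contradicting the hypothesis. The paper instead runs an explicit backward walk: it first traces edges backward from an arbitrary vertex to produce a cycle $c$, uses the no-source-cycle hypothesis to find a second edge $f_1$ entering some $v_j \in c^0$, and walks backward from $f_1$, splitting into Case 1 (the walk re-enters $\{v_1,\dots,v_n\}$, and three explicit formulas assemble a second cycle based at $v_j$) and Case 2 (the walk closes up into a new cycle $c_1$, and the whole process restarts from $c_1$), with termination justified by finiteness of $E$ and an informal remark about revisited vertices. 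Your approach buys a cleaner logical structure --- it eliminates the iteration and its somewhat sketchy termination bookkeeping, and it makes visible exactly where each hypothesis is used (source-freeness forces condensation sources to be nontrivial; no-source-cycle rules out the all-in-degree-one regime) --- at the cost of invoking SCC and condensation machinery; the paper's argument is more elementary and self-contained, and its explicitly constructive flavor matches the hands-on vector constructions that follow in Lemma \ref{lem 4.3}. One point worth stating explicitly in your write-up, though it is easily supplied: a path between two vertices of the same strongly connected component stays inside that component, which is what licenses both your claim that $q_i$ lies in $S$ and your claim that nontrivial components have $d_S(v)\geq 1$ everywhere.
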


\begin{proof}
Let $v_1\in E^0$ be an arbitrary vertex.  Then, as $v_1$ is not a source, there exists   $e_1\in
E^1$ such that $r(e_1) = v_1$. Set $v_2:=s(e_1)$. If $v_2 = v_1$, then we get a cycle $c = e_1$.
Otherwise, as $v_2$ is not a source,  there exists $e_2\in E^1$
such that $r(e_2) = v_2$. Let $v_3:=s(e_2)$, and we continue to repeat this process. 
Since $E$ is finite, there exists a  smallest integer $n$
such that $r(e_{n}) = v_{n}$ and $s(e_n) = v_i$ for some $i$ ($1\leq i \leq n$), and
$c:=e_n \cdots e_{i+1}e_{i}$ is a cycle in $E$.

Using the ``no source cycle" hypothesis  on  $c$, there then exists  $j \in \{i, ..., n\}$ such that $|r^{-1}(v_j)| \geq 2$. Let
$f_1\in r^{-1}(v_j)$ such that $f_1 \neq e_j$. If $s(f_1) = v_j$, then $v_j$ is the base of
two distinct cycles $c$ and $c':= f_1$, as desired. Otherwise, since $w_1:= s(f_1)$ is not a
source, there exists  $f_2\in E^1$ such that $r(f_2) = w_1$. Since $E$ is finite, we must eventually arrive at one of these two cases:

\smallskip

\underline{Case 1}. There exists an integer $m$ such that $r(f_m) = w_{m-1}$ and $s(f_m) = v_k$
for some $k$ $(1\leq k\leq n)$. Choose the smallest such $m$.  Then, we have that
\begin{equation*}
c' = \left\{
\begin{array}{lcl}
e_{j-1}\cdots e_kf_{m} \cdots f_1&  & \text{if }k< j, \\
&  &  \\
f_{m}\cdots f_1&  & \text{if }k = j,\\
&  &  \\
e_{j-1}\cdots e_ie_{n}e_{n-1}\cdots e_kf_{m}\cdots f_1&  & \text{if }k> j%
\end{array}%
\right.
\end{equation*}%
is a cycle based at $v_j$, which is different from $c$, for which $|r^{-1}(v_j)| \geq 2$, as desired.

\smallskip

\underline{Case 2}. There exists an integer $m$ such that $r(f_m) = w_{m-1}$ and $s(f_m) = w_\ell$
for some $\ell$ $(1\leq \ell \leq m-1)$.  Choose the smallest such $m$.   Then 
$c_1:=f_{m}\cdots f_{\ell}$
 is a cycle in $E$.     In this case, we repeat the process described above, starting with the cycle $c_1$.   
 
 \smallskip
 
 In this way we produce a sequence of cycles $c, c_1, \dots, c_t$.   If for some $c_i$ we are in Case 1, then we are done.    We note that if we start the process with some $c_i$, and one of the vertices appearing in the process for $c_i$ is a vertex which has previously appeared in the process corresponding to one of the cycles $c, c_1, \dots, c_{i-1}$, then we may find a vertex of the desired type by constructing two cycles in a manner similar to that done in Case 1.     Therefore, as $E$ is finite, we must eventually reach Case 1, thus completing the proof.  
\end{proof}

The existence of a vertex of the type described in Lemma \ref{graphtheorylemma} will play a key role in the following result.

\begin{lem}\label{lem 4.3}
Let $E=(E^0,E^1,r,s)$ be a finite source-free graph in which no cycle is a source cycle.   Let $n:= |E^0|$.  Then for each positive integer $a$ there exists a row vector
$\overrightarrow{m_a}=[m_1\ ...\ m_n]\in \mathbb{N}^n$ 
such that $m_i\geq a$ for all $i = 1,..., n$, and
$$(A_E^t-I_n)\overrightarrow{m_a}^t\geq [a\ ...\ a]^t.$$
\end{lem}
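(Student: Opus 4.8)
The plan is to first unwind the matrix inequality into a purely combinatorial condition on $E$, and then to build $\overrightarrow{m_a}$ by counting backward paths. Since the $(i,j)$ entry of $A_E^t$ is the number of edges from $v_j$ to $v_i$, row $i$ of $(A_E^t-I_n)\overrightarrow{m_a}^t$ is $\sum_{e\in r^{-1}(v_i)} m_{s(e)} - m_i$, so the asserted inequality is equivalent to the system
$$\sum_{e\in r^{-1}(v_i)} m_{s(e)} \geq m_i + a \qquad (i=1,\dots,n).$$
Because both sides scale linearly in the weights, it suffices to produce a single vector $\overrightarrow{w}=[w_1\ \dots\ w_n]\in\mathbb{N}^n$ with every $w_i\geq 1$ and $\sum_{e\in r^{-1}(v_i)} w_{s(e)} \geq w_i+1$ for all $i$; then $\overrightarrow{m_a}:=a\,\overrightarrow{w}$ has all entries $\geq a$ and satisfies the required bound.

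For each vertex $v$ and each $k\in\mathbb{N}$ let $P_k(v)$ be the number of paths of length $k$ in $E$ ending at $v$, with $P_0(v)=1$. The recursion $P_{k+1}(v)=\sum_{e\in r^{-1}(v)}P_k(s(e))$ is exactly the bookkeeping appearing in the inequality above, which suggests setting
$$w(v):=\sum_{k=0}^{N}P_k(v)$$
for a large $N$ to be chosen. A direct computation then gives $\sum_{e\in r^{-1}(v)}w(s(e))=\sum_{k=1}^{N+1}P_k(v)=w(v)-1+P_{N+1}(v)$, so the condition $\sum_e w(s(e))\geq w(v)+1$ collapses to the single clean demand $P_{N+1}(v)\geq 2$ for every $v$. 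Thus the entire problem reduces to finding one length $N+1$ for which every vertex is the endpoint of at least two distinct paths of that length.

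Two observations supply this. First, since $E$ is source-free every vertex has an incoming edge, so prepending a fixed incoming edge injects the length-$L$ paths ending at $v$ into the length-$(L+1)$ ones; hence $P_L(v)$ is non-decreasing in $L$, and once $P_{L_0}(v)\geq 2$ it persists for all larger lengths. Second, to secure $P_{L_0(v)}(v)\geq 2$ for each individual $v$, I would pass to the backward-reachable subgraph $B(v)=\{u\in E^0 : u\geq v\}$. Being backward-closed, $B(v)$ preserves all in-neighborhoods $r^{-1}(u)$, so it is again finite, source-free, and free of source cycles; Lemma \ref{graphtheorylemma} then furnishes a vertex $v_0\in B(v)$ carrying two distinct cycles. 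Traversing these (of lengths $p$ and $q$) to the common length $pq$ yields two distinct closed paths at $v_0$, and appending a fixed path from $v_0$ to $v$ produces two distinct paths of a fixed length ending at $v$, giving the desired $L_0(v)$.

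The main obstacle is uniformity over all vertices: the argument a priori produces a different good length $L_0(v)$ for each $v$. This is precisely what the monotonicity of $P_L(v)$ resolves, for taking $N:=\max_{v\in E^0}L_0(v)$ (finite, as $E^0$ is finite) forces $P_{N+1}(v)\geq 2$ simultaneously for all $v$. The only other delicate point is verifying that $B(v)$ genuinely inherits the hypotheses of Lemma \ref{graphtheorylemma}, which hinges on the observation that passing to a backward-closed vertex set leaves the relevant sets $r^{-1}(u)$ unchanged, so that neither source-freeness nor the absence of source cycles is lost in the restriction.
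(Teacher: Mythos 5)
Your proposal is correct, but it reaches the conclusion by a genuinely different route than the paper. Both arguments hinge on Lemma \ref{graphtheorylemma}, and both exploit the fact that a backward-closed vertex set inherits in-neighborhoods exactly (your $B(v)$ plays the role of the paper's subgraphs with $F^1 := r^{-1}(F^0)$). But where the paper runs an induction on a decomposition of $E$ into $T(w_1)$ and its complement, writing $A_E^t - I_n$ in block form and, in the base case, assigning explicit distance-based weights ($m_1 = 3na$ at the distinguished vertex, decreasing by $a$ along shortest paths out of it, with $|r^{-1}(w_1)|\geq 2$ invoked to verify the inequality at $w_1$), you dispense with the induction and the block bookkeeping altogether: you rewrite the inequality as $\sum_{e\in r^{-1}(v_i)} m_{s(e)} \geq m_i + a$, rescale to $a=1$, and take as weights the partial sums $w(v)=\sum_{k=0}^{N} P_k(v)$ of path counts, collapsing everything to the single condition $P_{N+1}(v)\geq 2$ for all $v$; source-freeness gives monotonicity of $P_L(v)$ in $L$ (so a uniform $N$ exists), and Lemma \ref{graphtheorylemma} applied to $B(v)$ produces, for each $v$, two distinct paths of a common length ending at $v$. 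Your version is more conceptual --- it makes transparent exactly where each hypothesis enters --- at the cost of weights that are typically exponentially large in $|E^0|$, whereas the paper's construction gives small explicit entries (at most $3na$) that it then reuses concretely in Example \ref{vectorprocessexample} and Example \ref{NOTUGNExample}. The one spot where you should add a line of justification is the claim that the powers $c_1^{q}$ and $c_2^{p}$ (where $c_1, c_2$ are the distinct cycles of lengths $p, q$ based at $v_0$) are distinct closed paths of length $pq$: this is true but not immediate; for instance, since $c_1$ and $c_2$ are cycles, $c_1^{q}$ visits $v_0$ exactly at times $0, p, 2p, \dots$ and $c_2^{p}$ exactly at times $0, q, 2q, \dots$, so equality of the two paths would force $p=q$ and hence $c_1=c_2$, a contradiction.
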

\begin{proof} 
By Lemma \ref{graphtheorylemma} there exists a vertex $w_1\in E^0$ such that
$w_1$ is a base of distinct cycles,   and $|r^{-1}(w_1)|\geq 2$.
If $E^0\setminus T_E(w_1)\neq \emptyset$, then we consider the subgraph
$F=(F^0,F^1,r|_{F^1},s|_{F^1})$, where $F^0 :=E^0\setminus T(w_1)$ and $F^1:=r^{-1}(F^0)$.
Note that we always have  $r^{-1}_F(v) = r^{-1}_E(v)$ for any vertex $v\in F^0$.
This implies that $F$ is a source-free graph in which no cycle is a source cycle.

So we may apply  Lemma \ref{graphtheorylemma} to $F$, to conclude the existence of  a vertex
$w_2 \in F^0$ such that $w_2$ is a base of distinct cycles  in $F$ and $|r^{-1}_F(w_2)|\geq 2$. If $F^0\setminus T_F(w_2)\neq
\emptyset$, we continue to repeat the process. Since $E$ is
finite, this process ends after finitely many (say, $k$) steps.   We consider the set of  vertices $$\{w_1,w_2, ..., w_k\}.$$ Anticipating an induction argument, we note that the number of steps required to complete the same process starting with either of the two graphs $E^0 \setminus T(w_1)$ or $T(w_1)$ is less than $k$.

We use induction on $k$ to establish the result.    If $k=1$ we  have that $E^0 = T_E(w_1)$. 
By renumbering vertices in $E^0$, without loss of generality, we may assume that
$$E^0=\{v_1,v_2,...,v_n\}, v_1 :=  w_1, \ \mbox{and} \ |r^{-1}(v_1)|\geq 2.$$








For each positive integer $a$, we choose  the row vector $\overrightarrow{m_a}:=
[m_1\ ...\ m_n]$ according to the following algorithm.   

- Define $m_1:=3na$.

- For any $i \in \{2, ..., n\}$, since $E^0 = T_E(v_1)$ we have $v_i\in T_E(v_1)$, and hence, 
there exists a path $p$ (which can be chosen of  minimal length) such that $s(p) = v_1$ and $r(p) = v_i$.
Then  we define  $$m_i := m_1 - |p|a = (3n -|p|)a, $$ 
where we denote by $|p|$ the length of
the path $p$.  

 We note that since $|E^0| = n$, we always have that $1\leq|p|\leq n$,  so that $m_i\geq 2na$. Also, for any
$j \in \{2, ..., n\}$, $v_j \in r(s^{-1}(v_i))$ for some  vertex $v_i\neq v_j$, so there exists 
an $i\in \{1, ..., n\}$ such that $m_j = m_i -a$.  




We will prove that the vector $\overrightarrow{m_a}$ satisfies the statement, in other words, that 
$(A_E^t-I_n)
\overrightarrow{m_a}^t\geq [a\ ...\ a]^t.$ Equivalently, we show 
\begin{center}
$a_{1j}m_1+ ... +(a_{jj}-1)m_j+...+a_{nj}m_n\geq a$ for all $j=1,...,n$.
\end{center}

\underline{For $j\in \{2,...,n\}$}: as $v_j\in T(v_1)$, there exists a vertex $v_i$
such that $v_j \in r(s^{-1}(v_i))$ and $v_i \neq v_j$.   As noted above, we then may find
an element $i\in \{1,...,n\}$ such that $a_{ij}\geq 1$ and $m_j = m_i -a$.
This implies that
\begin{equation*}
\begin{array}{rl}
a_{1j}m_1+...+(a_{jj}-1)m_j+...+a_{nj}m_n&\geq a_{ij}m_i+(a_{jj}-1)m_j \\
&\geq m_i-m_j=a.
\end{array}
\end{equation*}

\underline{For $j=1$}: as $|r^{-1}(v_1)|\geq 2$, there exist two distinct elements
$k, h\in \{1,..., n\}$ such that $a_{k1}\geq 1$ and $a_{h1}\geq 1$.
If $k\geq 2$ and $h\geq 2$, we have that
\begin{center}
$\begin{array}{rl}
(a_{11}-1)m_1+...+a_{n1}m_n&\geq -m_1+m_k+m_h\\
&\geq -3na + 2na+ 2na = a.\\
\end{array}$
\end{center}
Otherwise, without loss of generality, we may assume that $h\geq 2$.
We then have that $a_{11}-1\geq 0$ and
$$(a_{11}-1)m_1+...+ a_{h1}m_h +...+ a_{n1}m_n \geq m_h\geq 2a.$$

These two cases establish the claim.  Now we proceed inductively. For $k>1$, let $F=(F^0, F^1, r|_{F^1}, s|_{F^1})$
and  $G=(G^0, G^1, r|_{G^1}, s|_{G^1})$ be the subgraphs of
$E$ defined by: $$F^0 :=E^0\setminus T(w_1) \text{ and }F^1:=r^{-1}(F^0)$$
and $$G^0 :=T(w_1) \text{ and } G^1 := \{f\in E^1\ |\ s(f),\ r(f) \in G^0\}.$$
Clearly, $F$ and $G$ satisfy the same conditions as the graph $E$.
Then, by the induction hypothesis, for each  positive integer $a$, there exist
row vectors $\overrightarrow{x_a}=[m_1\ ...\ m_f]\in \mathbb{N}^f$ ($m_i\geq a$) and
$\overrightarrow{y_a}= [m'_1\ ...\ m'_g]\in \mathbb{N}^g$ ($m'_j\geq a$)
such that $$(A_F^t-I_f)\overrightarrow{x_a}^t\geq [a\ ...\ a]^t$$ and
$$(A_G^t-I_g)\overrightarrow{y_a}^t\geq [a\ ...\ a]^t,$$ where $f=|F^0|$,
$g = |G^0|$, and $A_F$ and $A_G$ are the incidence matrices of $F$ and
$G$, respectively.

We write the matrix $(A_E^t-I)$ of the form:
\begin{equation*}A_E^t-I=\left(\begin{tabular}{cc}
$A_F^t-I_f$&$A_{21}$\\
$A_{12}$&$A_G^t-I_g$
\end{tabular}\right)
\end{equation*}
where $A_{12}$ and $A_{21}$ are the appropriately sized rectangular submatrices of $A_E^t-I_n$, each having only nonnegative integer entries (since none of these entries is on the main diagonal of $A_E^t-I_n$).  Let
$\overrightarrow{m_a}:=[m_1\ ...\ m_f\ \ m'_1\ ...\ m'_g ]\in \mathbb{N}^n$.
We then get
$$(A_E^t-I_n)\overrightarrow{m_a}^t=\left(\begin{tabular}{cc}
$A_F^t-I_f$&$A_{21}$\\
$A_{12}$&$A_G^t-I_g$
\end{tabular}\right)\left(\begin{tabular}{c}$\overrightarrow{x_a}^t$\\
$\overrightarrow{y_a}^t$\end{tabular}\right)\geq [a\ ...\ a]^t,$$  which ends the proof.
\end{proof}

For clarification, we illustrate the ideas which arise in the proof of Lemma~\ref{lem 4.3} by presenting the following example.

\begin{example}\label{vectorprocessexample}

Let $E$ be the  graph 

$$\xymatrix{ \bullet^{v_1} \ar@(ul,ur) \ar@(dr,dl) \ar[r] & \bullet^{v_2} \ar[r] & \bullet^{v_3} & \bullet^{v_4} \ar[l]     \ar@(ul,ur) \ar@(dr,dl)   \ar@/^.5pc/[r]           &\bullet^{v_5} \ar@/^.5pc/[l]  }$$

\bigskip

\noindent
Note that $v_4$ is the base of two distinct cycles, and $|r^{-1}(v_4)| \geq 2$.   We designate $w_1 = v_4$.  
Let $G$ denote the subgraph $T(w_1) = T(v_4)$, and let $F$ denote $E \setminus G = E \setminus T(w_1)$.  
Then, $F$ and $G$ are the following graphs:

$$ \ F = \xymatrix{ \bullet^{v_1} \ar@(ul,ur) \ar@(dr,dl) \ar[r] & \bullet^{v_2}    } \  \ \ \ \ \ \ \ \ \ \ \ G = \xymatrix{  \bullet^{v_3} & \bullet^{v_4} \ar[l]     \ar@(ul,ur) \ar@(dr,dl)   \ar@/^.5pc/[r]           &\bullet^{v_5} \ar@/^.5pc/[l]  } \ \ $$

\bigskip


\noindent
Let $a$ be an arbitrary positive integer. As shown in the proof of Lemma~\ref{lem 4.3},
we  choose vectors
$\overrightarrow{x_a}=[m_1\ \ m_2]\in \mathbb{N}^2$ and
$\overrightarrow{y_a}= [m_3\ \ m_4\ \ m_5]\in \mathbb{N}^3$ as follows:
\[m_1 = 3a|F^0|= 6a\ \ \text{ and }\ \ m_2 = m_1 - a = 5a,\]
\[m_4 = 3a|G^0|= 9a\ \text{ and }\ m_3=m_5 = m_4 - a = 8a.\]
We then have that
$$(A_F^t-I_2)\overrightarrow{x_a}^t
=\left(\begin{tabular}{cc}
1&0\\
1&-1\\
\end{tabular}\right)\left(\begin{tabular}{c}
6$a$\\
5$a$\\
\end{tabular}\right)=\left(\begin{tabular}{c}
6$a$\\
$a$\\
\end{tabular}\right)
\geq \left(\begin{tabular}{c}
$a$\\
$a$\\
\end{tabular}\right)$$ and
$$(A_G^t-I_3)\overrightarrow{y_a}^t=\left(\begin{tabular}{ccc}
-1&1&0\\
0&1&1\\
0&1&-1\\
\end{tabular}\right)\left(\begin{tabular}{c}
8$a$\\
9$a$\\
8$a$\\
\end{tabular}\right)=\left(\begin{tabular}{c}
$a$\\
17$a$\\
$a$\\
\end{tabular}\right)\geq \left(\begin{tabular}{c}
$a$\\
$a$\\
$a$\\
\end{tabular}\right).$$
Furthermore,
\begin{equation*}A_E^t-I_5
=\left(\begin{tabular}{ccccc}
1&0&0&0&0\\
1&-1&0&0&0\\
0&1&-1&1&0\\
0&0&0&1&1\\
0&0&0&1&-1\\
\end{tabular}\right)
=\left(\begin{tabular}{cc}
$A_F^t-I_2$&$A_{21}$\\
$A_{12}$&$A_G^t-I_3$
\end{tabular}\right)
\end{equation*}
where $A_{12}=\left(
                \begin{array}{cc}
                  0 & 1 \\
                  0 & 0 \\
                  0 & 0 \\
                \end{array}
              \right)
$ and $A_{21}= \left(
                 \begin{array}{ccc}
                   0 & 0 & 0 \\
                   0 & 0 & 0 \\
                 \end{array}
               \right)
$.

Let $\overrightarrow{m_a}:=[6a\ \
5a\ \ 8a\ \ 9a\ \ 8a]\in \mathbb{N}^5$. We then get
$$(A_E^t-I_5)\overrightarrow{m_a}^t=\left(\begin{tabular}{ccccc}
1&0&0&0&0\\
1&-1&0&0&0\\
0&1&-1&1&0\\
0&0&0&1&1\\
0&0&0&1&-1\\
\end{tabular}\right)\left(\begin{tabular}{c}
6$a$\\
5$a$\\
8$a$\\
9$a$\\
8$a$\\
\end{tabular}\right)=\left(\begin{tabular}{c}
6$a$\\
$a$\\
6$a$\\
17$a$\\
$a$\\
\end{tabular}\right)\geq \left(\begin{tabular}{c}
$a$\\
$a$\\
$a$\\
$a$\\
$a$\\
\end{tabular}\right),
$$

\noindent
which concludes the example.  \hfill $\Box$ 
\end{example}

We are now in position to give a necessary and sufficient condition for the Leavitt path algebra
of a finite source-free graph to have  Unbounded Generating Number.
We recall an important property of the monoid
$M_E$. Let $E$ be a finite graph having $|E^0| = h$, and regular (i.e., 
non-sink) vertices $\{v_i\ |\ 1\leq i\leq z\}$. For $x = n_1 v_1 + \cdots + n_h v_h \in Y_E$ (the free abelian monoid on generating set $E^0$),  and $1\leq i \leq z$, let  $M_i(x)$ denote the
element of $Y_E$ which results by applying to $x$  the relation $(M)$ (given in Definition \ref{Tdefn}) corresponding to vertex
$v_i$.   For any sequence $\sigma$ taken from $\{1, 2,..., z\}$,
and any $x\in Y$, let $\Lambda_{\sigma}(x)\in Y$ be the element which
results by applying relation $(M)$  in the order specified by $\sigma$.     

\medskip

\noindent
{\bf The Confluence Lemma.}  (\cite[Lemma 4.3]{amp:nktfga})    For each pair $x, y\in Y_E$,   $[x] = [y]$ in $M_E$ if and only if there are 
sequences $\sigma$, $\sigma'$ taken from $\{1, 2,..., z\}$ such that $\Lambda_{\sigma}(x)
= \Lambda_{\sigma'}(y)$ in $Y_E$.  

\medskip

Here is the key precursor to our main result.

\begin{thm}\label{thm4.5}
Let $E=(E^0,E^1,r,s)$ be a finite source-free graph and $K$ any field. Then
$L_K(E)$ has Unbounded Generating Number if and only if $E$
contains a source cycle.
\end{thm}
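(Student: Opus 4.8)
The plan is to prove both directions using the monoid characterization of UGN given in Corollary~\ref{cor4.2}, which says that $L_K(E)$ has UGN if and only if whenever $m[\Sigma_{v\in E^0}v] + [x] = n[\Sigma_{v\in E^0}v]$ in $M_E$, we have $m\le n$. Write $d := \Sigma_{v\in E^0}v \in Y_E$, so that $[d]$ is the order-unit corresponding to $[L_K(E)]$. The two directions of the equivalence have very different flavors: the ``only if'' direction (no source cycle forces failure of UGN) is the substantive one that uses the machinery built up in Lemmas~\ref{graphtheorylemma} and~\ref{lem 4.3}, while the ``if'' direction (existence of a source cycle gives UGN) should follow from a more direct combinatorial analysis of the relations defining $M_E$.

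For the ``only if'' direction, suppose $E$ is a finite source-free graph in which \emph{no} cycle is a source cycle; I want to show $L_K(E)$ fails to have UGN. By Lemma~\ref{lem 3.6} (applied to $M=M_E$ and $\mu=[d]$), it suffices to produce a positive integer $N$ for which $N[d]$ is properly infinite, i.e. $2N[d] \le N[d]$ in $M_E$. The key tool is Lemma~\ref{lem 4.3}: with $n=|E^0|$, for each positive integer $a$ there is a vector $\overrightarrow{m_a}=[m_1\ \cdots\ m_n]\in\mathbb{N}^n$ with all $m_i\ge a$ and $(A_E^t-I_n)\overrightarrow{m_a}^t \ge [a\ \cdots\ a]^t$. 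I would interpret this inequality in monoid terms: the relation $(M)$ at a regular vertex $v_i$ replaces $[v_i]$ by $\Sigma_{e\in s^{-1}(v_i)}[r(e)]$, and the row-$j$ entry of $(A_E^t-I_n)\overrightarrow{m}^t$ measures the net change in the coefficient of $v_j$ when one applies the relations with multiplicities $m_i$. Concretely, set $x := \Sigma_i m_i v_i \in Y_E$; applying each regular-vertex relation $m_i$ times transforms $[x]$ into $[x']$ where the coefficient of each $v_j$ has increased by the $j$th entry of $(A_E^t-I_n)\overrightarrow{m_a}^t$, which is $\ge a$. Choosing $a$ appropriately (say $a=1$, or $a$ large enough that each $m_i \ge$ the common multiple needed) yields $[x] = [x + \Sigma_j a\, v_j] = [x] + a[d]$ in $M_E$, i.e. $[x] = [x] + a[d]$, and since each $m_i\ge a$ we get $a[d] \le [x]$ in the preorder; combining, $2a[d] \le [x] + a[d] = [x] \le \ldots$ should push through to $2N[d]\le N[d]$ for a suitable $N$. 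The main obstacle here is making this translation from the matrix inequality to a genuine equality/inequality in $M_E$ fully rigorous via the Confluence Lemma, since $(M)$ substitutions only \emph{add} the excess rather than producing it freely; one must track that the surplus $[a\ \cdots\ a]^t$ is literally realized as an added copy of (a multiple of) $[d]$.

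For the ``if'' direction, suppose $E$ contains a source cycle $c$; I want to show $L_K(E)$ has UGN, equivalently that $[d]$ has UGN. By the contrapositive of Lemma~\ref{lem 3.6}, it suffices to show that no multiple $N[d]$ is properly infinite. The intuition is that a source cycle $c$ has the property that every vertex $v\in c^0$ satisfies $|r^{-1}(v)|=1$, so that $v$ receives exactly one edge, and applying the relations $(M)$ can never increase the total coefficient mass landing on $c^0$ in an uncontrolled way — the source cycle acts as a ``conserved'' or monotone quantity. I would define a suitable $\mathbb{N}$-valued weight (a monoid homomorphism $\phi: M_E \to \mathbb{N}$, or a well-defined invariant) that counts, for $[x]\in M_E$, the total coefficient along $c^0$ reachable appropriately, show it is invariant under the relation $(M)$ (this is where $|r^{-1}(v)|=1$ on $c^0$ is used, guaranteeing the relation respects the weight), and observe $\phi([d])\ge 1$. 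If $2N[d]\le N[d]$ held, applying $\phi$ would give $2N\phi([d]) \le N\phi([d])$, a contradiction since $\phi([d])>0$.

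The main obstacle overall is constructing and verifying the invariant $\phi$ in the ``if'' direction: I must ensure it is genuinely well-defined on $M_E$ (constant on $\sim_E$-classes), which by the Confluence Lemma reduces to checking invariance (or the correct monotonicity) under a single application of relation $(M)$ at each regular vertex, and the source-cycle condition $|r^{-1}(v)|=1$ must be exactly what makes the cycle's contribution behave correctly. I anticipate that the cleanest formulation assigns weight $1$ to each vertex of the source cycle and weight $0$ to all others, then verifies that this weight is non-increasing (or preserved) under $(M)$ precisely because each cycle vertex is fed by a unique edge; the verification at the regular vertices of $c^0$ versus vertices outside $c^0$ is the delicate bookkeeping step.
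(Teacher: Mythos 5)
Your plan is correct, and both halves run along genuinely different lines from the paper's proof, which handles the entire theorem inside one Confluence-Lemma framework: for the ``if'' direction the paper turns $m[d]+[x]=n[d]$ into an integer linear system by counting how often each relation $M_j$ is invoked, then exploits the source-cycle columns of $A_E$ and sums the $p$ cycle equations to get $p(m-n)=-(n_1+\cdots+n_p)\le 0$; for the ``only if'' direction it deletes the sinks to form $F$, applies Lemma~\ref{lem 4.3} to $F$, and explicitly solves the system by hand-picking the $n_j,k_j,k'_j$. Your weight $\phi$ (coefficient sum over $c^0$) is exactly the paper's summation of the cycle equations repackaged as a monoid homomorphism $M_E\to\mathbb{N}$, and it does work: since each $v\in c^0$ has in-degree one, its unique incoming edge is its cycle edge, so no edge from outside $c^0$ lands in $c^0$ and each cycle vertex sends exactly one edge into $c^0$; hence $\phi(v_j)=\phi\bigl(\sum_{e\in s^{-1}(v_j)}r(e)\bigr)$ for every regular $v_j$ (not merely ``non-increasing''), $\phi$ descends to $M_E$ with no appeal to the Confluence Lemma at all, and $\phi([d])=p>0$ kills $m>n$ instantly --- a real simplification over the paper's bookkeeping. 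Your ``only if'' route via Lemma~\ref{lem 3.6} also completes: with $x=\sum_i m_iv_i$ one gets $[x]=[x]+a[d]+[r]$, and since $a[d]\le[x]\le M[d]$ for $M=\max_i m_i$, iterating gives $M[d]=[x]+[z]=[x]+ta[d]+[z]=M[d]+ta[d]$, so choosing $t$ with $ta\ge M$ makes $M[d]$ properly infinite. Two details to nail down there: (i) the substitutions may be performed in any order, because the coefficient of $v_i$ is only ever decreased by applications of $M_i$ itself, of which there are exactly $m_i$, so it remains positive whenever needed; (ii) your claim that each coefficient increases by the corresponding entry of $(A_E^t-I_n)\overrightarrow{m_a}^t$ is literally true only at regular vertices --- at a sink $v_j$ no relation is applied and the increase is the $j$th entry \emph{plus} $m_j$, which is larger still, so the needed lower bound $a$ survives (alternatively, strip the sinks first as the paper does). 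What your route buys is conceptual transparency, independence from the Confluence Lemma in the ``if'' direction, and a bonus: the invariant argument shows a source cycle forces UGN in \emph{any} finite graph, not just a source-free one; what the paper's route buys is a single uniform mechanism for both directions and explicit witnesses $x$ for the failure of UGN, which it then reuses in Example~\ref{NOTUGNExample}.
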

\begin{proof}

 We denote $E^0$ by $\{v_1,v_2,...,v_h\}$, in such a way that the non-sink vertices of $E$ appear as $v_1, ..., v_z$.

\smallskip

$(\Longleftarrow )$ Assume that $E$ contains a source cycle $c$;  we prove  that $L_K(E)$ has Unbounded
Generating Number. We use  Corollary \ref{cor4.2} to do so. Namely,
let $m$ and $n$ be positive integers such that
\begin{center}
$m[\sum_{i=1}^hv_i]+[x]=n[\sum_{i=1}^hv_i]$ in $M_E$
\end{center}
for some $[x]\in M_E$. We must show that $m\leq n$.  We write $x\in Y_E$
as  $$x=\sum_{i=1}^hn_iv_i,$$ where $n_i\ (i = 1,..., h)$ are nonnegative
integers. By the Confluence Lemma and and the hypothesis $m[\sum_{i=1}^hv_i]+[x]=n[\sum_{i=1}^hv_i]$, there are
two sequences $\sigma$ and $\sigma'$ taken from $\{1,...,z\}$ for which
$$\Lambda_{\sigma}(\sum_{i=1}^h(m + n_i)v_i)=\gamma=\Lambda_{\sigma'}(n\sum_{i=1}^hv_i)$$
for some $\gamma\in Y$. But each time a substitution of the form
$M_j \ (1\leq j\leq z)$ is made to an element of $Y$, the effect on that element is to:
\begin{itemize}
\item[(i)] subtract $1$ from the coefficient on $v_j$;
\item[(ii)] add $a_{ji}$ to the coefficient on $v_i$ (for $1\leq i\leq h$).
\end{itemize}
For each $1 \leq j \leq z$, denote the number of times that $M_j$ is invoked in $\Lambda_{\sigma}$ (resp., $\Lambda_{\sigma'}$) by  $k_j$ (resp., $k'_j$). Recalling
the previously observed effect of $M_j$ on an element of $Y$, we
see that
\begin{equation*}
\begin{array}{rcl}
\gamma &=& \Lambda_{\sigma}(\sum_{i=1}^h(m + n_i)v_i)  \smallskip \\  
&=& ((m+n_1-k_1)+ a_{11}k_1+a_{21}k_2+...+a_{z1}k_z)v_1\\
&&+ ((m+n_2-k_2)+ a_{12}k_1+a_{22}k_2+...+a_{z2}k_z)v_2 \\
&& + \ \  \cdots \\
&&+ ((m+n_z-k_z)+ a_{1z}k_1+a_{2z}k_2+...+a_{zz}k_z)v_z\\
&&+ ((m+n_{z+1})+  a_{1(z+1)}k_1+a_{2(z+1)}k_2+...+a_{z(z+1)}k_z)v_{z+1}  \\
&& + \ \ \cdots \\
&&+ ((m+n_{h})+a_{1h}k_1+a_{2h}k_2+...+a_{zh}k_z)v_h .
\end{array}
\end{equation*}
On the other hand, we have
\begin{equation*}
\begin{array}{rcl}
\gamma &=& \Lambda_{\sigma'}(n\sum_{i=1}^hv_i) \smallskip \\
&=& ((n-k'_1)+ a_{11}k'_1+a_{21}k'_2+...+a_{z1}k'_z)v_1\\
&& +\ ((n-k'_2)+ a_{12}k'_1+a_{22}k'_2+...+a_{z2}k'_z)v_2 \\
&& + \ \ \cdots \\
&& +\ ((n-k'_z)+ a_{1z}k'_1+a_{2z}k'_2+...+a_{zz}k'_z)v_z\\
&& +\ (n\ +\  a_{1(z+1)}k'_1\ +\ a_{2(z+1)}k'_2\ +\ ...\ +\ a_{z(z+1)}k'_z)v_{z+1} \\
&& + \ \ \cdots  \\
&& +\ (n+a_{1h}k'_1+a_{2h}k'_2+...+a_{zh}k'_z)v_h .
\end{array}
\end{equation*}   For each $1 \leq i \leq z$, define  $m_i :=k'_i-k_i$.  Then from the above observations, equating coefficients on the free generators $\{v_i \ | \ 1\leq i \leq h\}$ of $Y_E$, we get the following system of equations:
\begin{equation}\label{14}
\left\{
\begin{array}{rcl}
m-n+n_1 & = &(a_{11}-1)m_1+a_{21}m_2+...+a_{z1}m_z \\
m-n+n_2 & = & a_{12}m_1+(a_{22}-1)m_2+...+a_{z2}m_z \\
& \vdots & \\
m-n+n_z & = & a_{1z}m_1+a_{2z}m_2+...+ (a_{zz}-1)m_z\\
m-n+n_{z+1} & = & a_{1(z+1)}m_1+a_{2(z+1)}m_2+...+a_{z(z+1)}m_z \\
 & \vdots  & \\
m-n+n_h & = & a_{1h}m_1+a_{2h}m_2+...+a_{zh}m_z \\
\end{array}
\right.
\end{equation}

By hypothesis $c$ is a source cycle in $E$, i.e.,  $|r^{-1}(v)| = 1$ for all $v\in c^0$.   By renumbering vertices if necessary, we may assume without loss of generality  that
$c^0=\{v_1,...,v_p\}$.   (Note that, as each vertex in $c^0$ emits at least one edge, we have that  each of $\{v_1,...,v_p\}$ is a regular vertex.)    The condition  $|r^{-1}(v)| = 1$ then yields:  

\smallskip

-  $a_{i,i+1} = 1$ for $1\leq i \leq p-1$; 
 
 - $a_{p,1} = 1$;  
 
 - $a_{j,i+1} = 0$ for $1\leq i \leq p-1$ and $j \neq i$ ($1\leq j \leq h$);  and 
 
-   $a_{j,1} = 0$ if $j\neq p$ ($1\leq j \leq h$).         

\smallskip

\noindent
      If $p =1$ (i.e., if $c$ is a loop), then $a_{11} = 1$, and  first equation in the system of equations
(1) becomes  
$$m-n + n_1 =  (1 - 1)m_1 + 0m_2 + \cdots + 0m_z = 0,$$ so $m-n = -n_1\leq 0$, i.e., $m\leq n$.

If $p \geq 2$, then using the noted   information about the $a_{i,j}$,  the $p$ first equations of the system
of equations (1) can be written as:
\begin{equation*}\label{15}
\left\{
\begin{array}{rclrrrrr}
m-n+n_1 & = &-m_1 & & & & &+m_p \\
m-n+n_2 & = & m_1 &-m_2 & & & & \\
m-n+n_3 & = &      & m_2&-m_3 & & & \\
 & \vdots & \\
m-n+n_p & = &      & & & &m_{p-1} & -m_p\\
\end{array}
\text{.}
\right.
\end{equation*}
Then  adding both sides yields that $p(m-n)  + (n_1+...+n_p) = 0$, so that  $p(m-n) = -(n_1+...+n_p) \leq 0$, which gives $m\leq n$.

Therefore, $L_K(E)$ has Unbounded Generating Number.

\medskip

$(\Longrightarrow)$  Assume conversely that $E$ does not contain any source cycles.   We will prove that $L_K(E)$
does not have Unbounded Generating Number.  So let $m$ and $n$ be two positive
integers such that $m> n$. We will establish the existence of  an element $x=\sum_{i=1}^hn_iv_i\in Y_E$
such that $$m[\sum_{i=1}^hv_i]+[x]=n[\sum_{i=1}^hv_i]$$
in $M_E$. 
Equivalently, arguing as in the previous half of the proof,  we show that we can find nonnegative integers $n_i,\ k_j$ and $k'_j$ ($i=1, ..., h$
and $j= 1,..., z$) such that
\begin{equation}\label{14}
\left\{
\begin{array}{rcl}
m-n+n_1 & = &(a_{11}-1)m_1+a_{21}m_2+...+a_{z1}m_z \\
m-n+n_2 & = & a_{12}m_1+(a_{22}-1)m_2+...+a_{z2}m_z \\
&  \vdots & \\
m-n+n_z & = & a_{1z}m_1+a_{2z}m_2+...+ (a_{zz}-1)m_z\\
m-n+n_{z+1} & = & a_{1(z+1)}m_1+a_{2(z+1)}m_2+...+a_{z(z+1)}m_z \\
 & \vdots  & \\
m-n+n_h & = & a_{1h}m_1+a_{2h}m_2+...+a_{zh}m_z \\
\end{array}
\right.
\end{equation}
where $m_j :=k'_j-k_j$ for all $j = 1, ..., z$.

We apply Lemma \ref{lem 4.3} to find such elements. Namely,
let $F$
be the subgraph of $E$ defined by:
$$F^0 :=\{v_1,..., v_z\} \text{ and }F^1:=r^{-1}(F^0).$$
 In other words, $F$ is the
graph produced from $E$ by removing the sinks.  Specifically, we have that $A^t_F$ is the $z \times z$ matrix 
$$A^t_F= \left(\begin{array}{ccc}
    a_{11} & ... & a_{z1}  \\
    a_{12} & ... & a_{z2}\\
    \vdots & \cdots & \vdots  \\
    a_{1z} & ... & a_{zz}
  \end{array}
\right)
$$   Also, we note that the first $z$  equations of the system of equations (\ref{14})  in the proof
of Theorem \ref{thm4.5}  is induced by the matrix $A^t_F - I_z$. 
 Easily we see that 
 $F$ contains neither sources nor  source cycles (because $E$ contains neither).     By Lemma \ref{lem 4.3} (applied to the graph $F$ and positive integer $a = m-n$), there is a row vector $\overrightarrow{m} =
[m_1\ ...\ m_z]\in \mathbb{N}^z$ such that $m_j\geq m-n$ for all $j=1,...,z$, and

$$(A_F^t-I_z)\overrightarrow{m}^t
\geq [m-n\ ...\ m-n]^t.$$ That is, we have
$$a_{1j}m_1+ ... +(a_{jj}-1)m_j+...+a_{zj}m_z\geq m-n$$ for all $j=1,...,z$.
For each $j=1, ..., z$, let
$$n_j:= a_{1j}m_1+ ... +(a_{jj}-1)m_j+...+a_{hj}m_h - (m - n).$$
For each $j=z+1,...,h$, as $v_j$ is not a source, there exists $i\in \{1,...,z\}$
such that $a_{ij}\geq 1$, and hence, for such $j$, 
$$a_{1j}m_1+a_{2j}m_2+...+a_{zj}m_z\geq a_{ij}m_i\geq m_i\geq m-n.$$
We then  choose the non-negative integers  $n_j$ $(j=z+1,...,h)$ as follows:
$$n_j:= a_{1j}m_1+a_{2j}m_2+...+a_{zj}m_z -(m-n).$$

Finally, positive integers $k_j$ and $k'_j$ $(j = 1, ..., z)$ are chosen arbitrarily
such that $m_j = k'_j - k_j$ for all $j=1,..,z$. Then,  a tedious but straightforward computation yields that this choice of integers indeed satisfies the system of equations (2) above, thus completing the proof of the theorem.      
\end{proof}

\begin{rem}
We have in fact shown in the proof of Theorem \ref{thm4.5}  that the UGN property fails for the order-unit $[\sum_{i=1}^hv_i]$ of $M_E$ for {\it every} pair of positive integers $m>n$; of course, it was required only to  show that it fails for {\it some} such pair.   Using the previously-mentioned {\it separativity} of $\mathcal{V}(L_K(E))$ (and so of $M_E$), one can easily show that failure of UGN for one pair is equivalent to failure of UGN for every pair.   \hfill $\Box$ 
\end{rem}

\begin{example}\label{NOTUGNExample}
We present a specific example of the construction presented in the proof of Theorem \ref{thm4.5} which shows that source-free graphs having no source cycles do not have UGN.  Let $K$ be a field and let $E $ be the graph

$$ \xymatrix{   \bullet^{v_1} \ar[r]     \ar@(ul,ur) \ar@(dr,dl)           & \bullet^{v_2} \ar[r]  & \bullet^{v_3} } $$

\medskip

\noindent
Clearly, $E$ is a source-free graph in which no cycle is a source cycle,  and
$$A^t_E=\left(
    \begin{array}{ccc}
      2 & 0 & 0 \\
      1 & 0 & 0 \\
      0 & 1 & 0 \\
    \end{array}
  \right).
$$ We will show that $L_K(E)$ does not have UGN. So let $m$ and $n$ be two positive integers such that
$m > n$. We will establish the existence of an element $x = \sum_{i=1}^3n_iv_i\in Y_E$ such that
\[m[\sum_{i=1}^3v_i] + [x] = n[\sum_{i=1}^3v_i].\] Equivalently, we show that
we can find nonnegative integers $n_i, k_j$ and $k'_j$ $(1\leq i\leq 3$ and $1\leq j\leq 2)$ such that
\begin{equation}
\left\{
\begin{array}{rcl}
m-n+n_1 & = &m_1\\
m-n+n_2 & = & m_1 - m_2\\
m-n+n_3 & = & m_2\\
\end{array}
\right.
\end{equation}
where $m_j :=k'_j-k_j$ for  $j = 1, 2$. Let $F$ be the graph produced  from $E$ by deleting $v_3$.
Note that \begin{equation*}A^t_F
=\left(\begin{tabular}{cccc}
2 & 0\\
1 & 0 \\

\end{tabular}\right)
\end{equation*} and

\begin{equation*}A^t_E
=\left(\begin{tabular}{cccc}
2 & 0 & 0 \\
1 & 0 & 0 \\
0 & 1 & 0 \\
\end{tabular}\right)
=\left(\begin{tabular}{ccc}
$A^t_F$ & \vdots &0\\
 ... &  &0\\
0&1&0\\
\end{tabular}\right).
\end{equation*} Also, the two first equations of the above system can be written as
$$(A^t_F-I_2)\left(\begin{tabular}{cc}
$m_1$\\
$m_2$\\
\end{tabular}\right)= \left(\begin{tabular}{cc}
$m - n + n_1$\\
$m - n + n_2$\\
\end{tabular}\right).$$

\noindent
As in the proof of Theorem \ref{thm4.5}, we define $m_1$ and $m_2$ as follows: \[m_1 = 3|F^0|(m-n) = 6(m-n), \ \text{ and } \ 
m_2 = m_1 - (m-n) = 5(m-n).\]    Subsequently, we define 
\[n_1 = m_1 - (m-n) = 5(m-n),\] \[n_2 = m_1-m_2- (m-n) =0, \text{ and }\]
\[n_3 = m_2 - (m-n) = 4(m-n).\] 

\noindent
Then the construction described in the proof of Theorem \ref{thm4.5} yields that the element  $$[x] = [5(m-n)v_1 + 4(m-n)v_3]$$ of $M_E$ satisfies  $$m[\sum_{i=1}^3v_i] + [x] = n[\sum_{i=1}^3v_i]$$ in $M_E$.    
It is instructive to verify the validity of this equation directly;  we achieve this by verifying the equivalent version   $$ [(6m - 5n)v_1 + mv_2 + (5m - 4n)v_3] = [nv_1 + nv_2 + nv_3]$$ in $M_E$.  In $M_E$ we have:  
$$ \mbox{(i)} \ \   [v_1] = [2v_1 + v_2],  \ \ \ \  \mbox{and} \ \ \ \  \mbox{ (ii)}  \  \ [v_2] = [v_3].$$   Recall  that $m_1 = 6(m - n)$ and $m_2 = 5(m - n)$. We must choose positive integers $k_i$ and $k'_i$  such that $m_i = k'_i - k_i$  $(i = 1, 2)$.  We choose these as follows: $k_1 = 1 = k_2$, $k'_1 = 6(m - n) +1$, and $k'_2 = 5(m - n) +1$. 
The left side can be transformed as follows:

\medskip

\qquad $[(6m - 5n)v_1 + mv_2 + (5m - 4n)v_3]$  
\begin{eqnarray*}
& = &  [(6m - 5n - 1)v_1 + v_1 + mv_2 + (5m - 4n)v_3]  \\
& = &  [(6m - 5n - 1)v_1 + (2v_1  + v_2)+ mv_2 + (5m - 4n)v_3]   \qquad  \ \ \mbox{by (i)}\\
& = &  [(6m - 5n +1)v_1   + mv_2 + v_2 + (5m - 4n)v_3]  \\
& = &  [(6m - 5n +1)v_1   + mv_2 + v_3 + (5m - 4n)v_3]   \qquad \qquad \qquad   \mbox{by (ii)}\\
& = &  [(6m - 5n +1)v_1   +mv_2 + (5m - 4n + 1)v_3].\\
\end{eqnarray*}

\vspace{-.2in}

On the other hand, an application of (i) yields $$[nv_1 + nv_2] = [(n-1)v_1 + v_1 + nv_2] = [(n-1)v_1 + (2v_1 + v_2) + nv_2] = [(n+1)v_1 + (n+1)v_2] .$$   By an easy induction this gives $$[nv_1 + nv_2] = [(n+ u)v_1 + (n+u)v_2]$$ for every $u\in \mathbb{N}$; in particular, applying (i)  $u= k_1' = 6(m-n)+1$ times gives the first step in the following transformation of the right side:

\medskip

\qquad $  [nv_1 + nv_2 + nv_3] $ 
\begin{eqnarray*}
\qquad & = &  [(6m - 5n + 1)v_1 + (6m - 5n + 1) v_2 + nv_3]  \\
& = &  [(6m - 5n - 1)v_1 + mv_2 + (5m - 5n + 1) v_2   + nv_3]   \\
& = &  [(6m - 5n - 1)v_1 + mv_2 + (5m - 5n + 1) v_3   + nv_3] \qquad     \mbox{by (ii), } k_2' \mbox{ times}  \\
& = &  [(6m - 5n +1)v_1   +mv_2 + (5m - 4n + 1)v_3]. \\
\end{eqnarray*}

\vspace{-.2in}

\noindent 
This completes the verification that the two quantities are indeed equal in $M_E$.    
\hfill $\Box$

\end{example}
In our main result (Theorem \ref{thm4.10}), we show how to eliminate the ``no sources" hypothesis in Theorem \ref{thm4.5}.   

\begin{defn}[{e.g.,  \cite[Notation 2.4]{ar:fpsmolpa}}] 
Let $E = (E^0, E^1, r, s)$ be a graph,
and let $v\in E^0$ be a source. We form the \emph{source elimination}
graph $E_{\setminus v}$ of $E$ as follows: 
$$(E_{\setminus v})^0 = E^0\setminus \{v\}; \ (E_{\setminus v})^1= E^1\setminus s^{-1}(v); 
\ s_{E_{\setminus v}} = s|_{(E_{\setminus v})^1}; \ \mbox{and} \ r_{E_{\setminus v}} = r|_{(E_{\setminus v})^1}.$$ 
 In other words, $E_{\setminus v}$ denotes
the graph gotten from $E$ by deleting $v$ and all of edges in $E$ emitting from $v$.    \hfill $\Box$ 
\end{defn}

Let $E$ be a finite graph. If $E$ is acyclic, then repeated application of the source
elimination process to $E$ yields the empty graph. On the other hand, if $E$ contains a cycle,
then repeated application of the source elimination process will yield a source-free graph
$E_{sf}$ which necessarily contains a cycle.

Consider the sequence of graphs which arises in some step-by-step process of source
eliminations
\[E := E_0\rightarrow E_1\rightarrow\cdots\rightarrow E_i\rightarrow\cdots
\rightarrow E_\ell : = E_{sf} .\] To avoid defining a graph to be the empty set, we define
$E_{sf}$ to be the graph $E_{triv}$ (consisting of one vertex and no edges) in case
$E_{\ell - 1} = E_{triv}$. 



Although there in general are many different orders in which a step-by-step source elimination process can be carried out,  the resulting source-free subgraph $E_{sf}$ is always the same.     For a cycle $c$ in $E$, denote by $T_E(c)$ the set of vertices 
$$   T_E(c) := \{w \in E^0 \ | \ v \geq w \mbox{ for some }v \in c^0\}.$$  

\begin{lem}
Let $E$ be a finite graph.   

(1)   $E_{sf} = E_{triv}$ if and only if $E$ is acyclic.

(2)   Suppose $E$ contains cycles.   Then 
$$E_{sf}^0 = \bigcup_{c} T_E(c) \ \ \mbox{(where } c \mbox{ runs over all cycles in }E),$$
and $E_{sf}^1 = E^1|_{E_{sf}^0}.$  
\end{lem}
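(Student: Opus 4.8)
The plan is to prove both statements by tracking which vertices survive the source elimination process. The key observation is that a vertex $w$ gets deleted at some stage if and only if $w$ becomes a source at that stage, and a vertex is never deleted precisely when it always has an incoming edge from a surviving vertex. I would first establish part (1) as essentially a restatement of the discussion preceding the lemma: if $E$ is acyclic, then every finite acyclic graph has a source (otherwise one could build an arbitrarily long backward path and, by finiteness, produce a cycle), so the elimination process strips away vertices one at a time until only $E_{triv}$ (or the empty graph, which we rename $E_{triv}$) remains; conversely, if $E$ contains a cycle $c$, then no vertex on $c$ can ever become a source during the process, since each vertex of $c$ always retains its incoming cycle-edge, so $E_{sf}\neq E_{triv}$. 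I expect part (1) to be routine given the remarks already made in the excerpt.

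For part (2), the plan is to show the two sets $E_{sf}^0$ and $\bigcup_c T_E(c)$ coincide by a double inclusion. For the inclusion $\bigcup_c T_E(c)\subseteq E_{sf}^0$, I would argue that no vertex in any $T_E(c)$ is ever eliminated. First, as in part (1), no vertex of a cycle $c$ is ever a source at any stage, so all of $c^0$ survives. Then, if $w\in T_E(c)$ with $v\geq w$ for some $v\in c^0$, I would induct along a path $p$ from $v$ to $w$: at each stage, the relevant edge of $p$ survives as long as its source survives, so by induction every vertex on $p$, and in particular $w$, survives the entire process. Hence $w\in E_{sf}^0$. For the reverse inclusion $E_{sf}^0\subseteq\bigcup_c T_E(c)$, I would take a surviving vertex $w$ and show it lies in some $T_E(c)$. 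Since $w$ is never deleted, $w$ is never a source in any $E_i$, so in the final source-free graph $E_{sf}$ there is an edge into $w$ from another surviving vertex $w_1$; repeating, I build an infinite backward path $\cdots\to w_2\to w_1\to w$ entirely within $E_{sf}^0$, and by finiteness this backward path must repeat a vertex, yielding a cycle $c$ contained in $E_{sf}$ with $w\in T_E(c)$.

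The description of the edge set $E_{sf}^1 = E^1|_{E_{sf}^0}$ should then follow almost immediately from the definition of source elimination: an edge $e$ is deleted exactly when its source $s(e)$ is deleted (source elimination removes $s^{-1}(v)$ when removing the source $v$), so an edge survives if and only if both its source and its range survive, i.e. if and only if both endpoints lie in $E_{sf}^0$. I would phrase this as: $E_{sf}^1$ consists of exactly those edges of $E^1$ with source and range in $E_{sf}^0$, which is the meaning of the restriction notation $E^1|_{E_{sf}^0}$.

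The main obstacle I anticipate is making the induction in the first inclusion of part (2) fully rigorous across the entire elimination sequence, since ``survives the process'' refers to a property holding simultaneously in every intermediate graph $E_i$, and one must be careful that deleting other vertices does not inadvertently strip the edges of the path $p$ before $w$ is reached. The clean way around this is to use the stage-independence already asserted in the excerpt (that $E_{sf}$ is well-defined regardless of the order of eliminations) and to characterize $E_{sf}^0$ intrinsically as the maximal subset $S\subseteq E^0$ such that every $v\in S$ has $r^{-1}_E(v)\cap\{\,e : s(e)\in S\,\}\neq\emptyset$; both inclusions then reduce to showing this maximal ``no-source'' set equals $\bigcup_c T_E(c)$, which avoids reasoning about any particular elimination order.
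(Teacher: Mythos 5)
Your proposal is correct, and it overlaps with the paper's proof on the forward inclusion but diverges on the converse. The paper isolates two invariance facts about a \emph{single} source elimination --- that $c$ is a cycle in $E$ iff it is a cycle in $E_{\setminus v}$, and that $T_E(c) = T_{E_{\setminus v}}(c)$ --- and then inducts along the elimination sequence: part (1) and the inclusion $\bigcup_c T_E(c)\subseteq E_{sf}^0$ fall out immediately, and the reverse inclusion is handled contrapositively (if $v\notin E_{sf}^0$, then $v$ was a source in some $E_{i-1}$, a source lies in no $T_{E_{i-1}}(c)$, and the invariance pulls this back to $v\notin T_E(c)$ in $E$). Your forward inclusion, via the edge-survival induction along a path from the cycle, is essentially the same content as the paper's invariance of $T_E(c)$, argued edge-by-edge; but your reverse inclusion is genuinely different: you work directly inside the final source-free graph $E_{sf}$, building a backward walk from a surviving vertex and extracting a cycle by pigeonhole (taking the first repeated vertex so the closed walk is an honest cycle), which avoids any stage-by-stage bookkeeping. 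Your closing suggestion --- characterizing $E_{sf}^0$ intrinsically as the maximal subset $S$ in which every vertex receives an edge from within $S$ --- is also sound (such sets are closed under union, any such set survives every elimination, and the backward-walk argument shows the maximal one equals $\bigcup_c T_E(c)$), and it has the side benefit of reproving the order-independence of $E_{sf}$ that the paper merely asserts in the text, rather than leaning on it. One further point in your favor: you explicitly verify $E_{sf}^1 = E^1|_{E_{sf}^0}$ (edges die exactly with their sources, and a surviving edge forces its range to survive since the range then always has an incoming edge), a step the paper's proof leaves implicit. The trade-off: the paper's two invariance observations give a uniform, reusable mechanism for all parts of the lemma; your route is slightly longer to set up but more self-contained on the converse inclusion.
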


\begin{proof}   We first note that, if $v\in E^0$ is a source, then it is easy to verify that 

\qquad (1) $c$ is a cycle in $E$ if and only if $c$ is a cycle in $E_{\setminus v}$, and 

\qquad (2) if $c$ is a cycle in $E$, then $T_E(c) = T_{E_{\setminus v}}(c).$

Now consider the sequence of graphs which arises in some step-by-step process of source
eliminations
\[E:= E_0\rightarrow E_1\rightarrow\cdots\rightarrow E_i\rightarrow\cdots
\rightarrow E_\ell =: E_{sf}.\]
Using the two observations, we immediately get that $E_{sf} = E_{triv}$ if and only if $E$ is
acyclic; and that  $T_E(c)\subseteq E^0_{sf}$ for each cycle $c$ in $E$. Now assume that
$v\notin E^0_{sf}$.  Let $i$ be minimal in $\{1,..., \ell\}$ such that 
$v\notin E_i$, and hence, $v$ is a source in $E_{i-1}$. This implies that $v\notin T_{E_{i-1}}(c)$
for each cycle $c$ in $E_{i-1}$. Applying the second observation, we get that $v\notin T_{E}(c)$
for each  cycle $c$ in $E$. 
\end{proof}

The key result which will allow the extension of Theorem~\ref{thm4.5} is the following observation 
of Ara and Rangaswamy.

\begin{lem}[{\cite[Lemma 4.3]{ar:fpsmolpa}}]\label{lem4.8}
Let $E$ be a finite graph and $K$ any field. If $v$ is a source which is not isolated, then $L_K(E)$ is Morita
equivalent to $L_K(E_{\setminus v})$.
\end{lem}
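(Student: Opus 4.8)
The plan is to realize $L_K(E_{\setminus v})$ as a corner of $L_K(E)$ cut out by a full idempotent, and then invoke the standard fact that a unital ring $R$ and a corner $eRe$ determined by a \emph{full} idempotent $e$ (i.e.\ one with $ReR = R$) are Morita equivalent. First I would record the structural consequences of the hypotheses: since $v$ is a source, $r^{-1}(v) = \emptyset$, so no edge (and in particular no loop) has range $v$; and since $v$ is not isolated, $s^{-1}(v) \neq \emptyset$, so $v$ is a regular vertex. The natural candidate for the idempotent is $e := 1 - v = \sum_{w \in E^0 \setminus \{v\}} w$, which is idempotent because $E^0$ is a set of orthogonal idempotents summing to $1$.

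Next I would prove that $e$ is full, the key point being that $v \in L_K(E)\, e\, L_K(E)$. For each $f \in s^{-1}(v)$ one has $r(f) \neq v$ (as $v$ is a source), so $r(f)\, e = r(f) = e\, r(f)$; using relations (2) this gives $fe = f$, hence $ff^* = f e f^* \in L_K(E)\, e\, L_K(E)$. Since $v$ is regular, relation (4) yields $v = \sum_{f \in s^{-1}(v)} ff^*$, so $v$ lies in the two-sided ideal generated by $e$. As $e$ itself lies there, $1 = e + v \in L_K(E)\, e\, L_K(E)$, proving fullness. This is precisely where the ``not isolated'' hypothesis is needed: if $v$ emitted no edges, relation (4) would be unavailable and $v$ would split off as a copy of $K$, so $e$ would fail to be full.

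The main work is then to identify the corner $e L_K(E) e$ with $L_K(E_{\setminus v})$. I would define a $K$-algebra homomorphism $\phi : L_K(E_{\setminus v}) \to e L_K(E) e$ sending each generator $w, g, g^*$ of $L_K(E_{\setminus v})$ to the element of $L_K(E)$ bearing the same name. Since every vertex and edge of $E_{\setminus v}$ avoids $v$, these images are fixed by left and right multiplication by $e$ and so genuinely lie in the corner; and the defining relations (1)--(4) of $L_K(E_{\setminus v})$ hold among them in $L_K(E)$. The crucial observation here is that $s^{-1}_{E_{\setminus v}}(w) = s^{-1}_{E}(w)$ for every $w \neq v$, so the Leavitt relation (4) transfers verbatim; thus $\phi$ is well-defined by the universal property. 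Surjectivity is a direct computation: $e L_K(E) e$ is spanned by elements $e\, pq^*\, e$ with $p,q$ paths in $E$ satisfying $r(p) = r(q)$, and since $v$ is a source, $ep = 0$ unless $s(p) \neq v$, in which case $p$ lies entirely within $E_{\setminus v}$; the same applies to $q$, so every nonzero spanning element already lies in the image of $\phi$.

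The step I expect to be the main obstacle is injectivity of $\phi$. Here I would invoke the Graded Uniqueness Theorem for Leavitt path algebras: $\phi$ respects the canonical $\mathbb{Z}$-grading given by $\deg w = 0$, $\deg g = 1$, $\deg g^* = -1$, because it carries homogeneous generators to homogeneous elements of the same degree, and $\phi(w) = w \neq 0$ for every vertex $w$ of $E_{\setminus v}$; the theorem then forces $\phi$ to be injective. With $\phi$ an isomorphism and $e$ full, the corner-ring criterion yields that $L_K(E)$ and $L_K(E_{\setminus v})$ are Morita equivalent, completing the proof.
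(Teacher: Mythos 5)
Your proof is correct, and it is essentially the argument behind the result as it appears in the literature: the paper itself offers no proof of this lemma, importing it directly from Ara--Rangaswamy \cite{ar:fpsmolpa}, whose proof is exactly your route --- identify $L_K(E_{\setminus v})$ with the corner $(1-v)L_K(E)(1-v)$, use relation (4) at the regular vertex $v$ (this is where non-isolation enters, just as you say) to show the idempotent $1-v$ is full, and conclude Morita equivalence from the standard full-corner criterion. Your verifications --- that $r(f)\neq v$ for $f\in s^{-1}(v)$ since $v$ is a source, that $s^{-1}_{E_{\setminus v}}(w)=s^{-1}_E(w)$ for $w\neq v$ so the universal property applies, and injectivity via the Graded Uniqueness Theorem --- are all sound, so there is nothing to repair.
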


\begin{lem}\label{lem4.9}
Let $E$ be a finite graph containing an isolated vertex, and $K$ any  field. Then $L_K(E)$ has
Unbounded Generating Number.
\end{lem}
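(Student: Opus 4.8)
The plan is to exploit the two defining features of an isolated vertex $v$: it is a \emph{sink} (hence not regular) and a \emph{source} (hence $r^{-1}(v) = \emptyset$). I would work with the monoid criterion of Corollary \ref{cor4.2}, together with the Confluence Lemma.

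First I would observe that the coefficient of $v$ is preserved by every relation $(M)$. Indeed, a relation $(M)$ is attached only to a regular vertex; since $v$ is a sink it is not regular, so no relation $(M)$ ever \emph{removes} any multiple of $v$. Moreover, the relation attached to a regular vertex $w$ replaces $w$ by $\sum_{e \in s^{-1}(w)} r(e)$, and $v$ can occur as some $r(e)$ only if $r^{-1}(v) \neq \emptyset$; but $v$ is a source, so this never happens, and thus no relation $(M)$ ever \emph{adds} any multiple of $v$ either. Consequently, for $x = \sum_i n_i v_i \in Y_E$ (writing $v = v_{i_0}$), the coefficient $n_{i_0}$ is unchanged by applying any $\Lambda_\sigma$. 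By the Confluence Lemma this means that the assignment sending $[x]$ to the coefficient of $v$ in $x$ is a well-defined monoid homomorphism $\phi \colon M_E \to \mathbb{N}$.

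Next I would verify the criterion of Corollary \ref{cor4.2}. Suppose $m[\sum_{w \in E^0} w] + [x] = n[\sum_{w \in E^0} w]$ in $M_E$ for some $[x] \in M_E$ and positive integers $m, n$. Applying $\phi$ and noting that $v$ occurs exactly once in $\sum_{w \in E^0} w$, one obtains $m + \phi([x]) = n$. Since $\phi([x]) \in \mathbb{N}$, this forces $m \leq n$, which is precisely the condition in Corollary \ref{cor4.2}(2). Hence $L_K(E)$ has Unbounded Generating Number.

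The only point requiring care is the invariance of the $v$-coefficient under the rewriting system, and the main obstacle is purely bookkeeping: one must confirm that $v$ appears on neither the left nor the right side of any relation $(M)$, which is exactly guaranteed by $v$ being simultaneously a source and a sink. I note that the same conclusion can be reached more ring-theoretically: the map $\pi \colon L_K(E) \to K$ sending $v \mapsto 1$ and every other generator (all other vertices, all edges, and all ghost edges) to $0$ respects relations (1)--(4) --- using again that $v$ is a source, so that no relation $(3)$ or CK relation produces $v$, and that $v$ is a sink, so its CK relation is absent --- and is a unital surjection. Lemma \ref{lem 3.3} applied to $\pi$, together with the fact that the field $K$ has UGN, then yields the result as well.
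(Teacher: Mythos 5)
Your proposal is correct, and its main argument takes a genuinely different route from the paper's. The paper dispatches the lemma in three lines: since $v$ is isolated, $L_K(E)\cong K\oplus L_K(E_{\setminus v})$, hence there is a unital surjection $L_K(E)\twoheadrightarrow K$, and Lemma~\ref{lem 3.3} together with the UGN property of the field $K$ finishes it. Your closing ring-theoretic remark --- the map $\pi$ sending $v\mapsto 1$ and all other generators to $0$ --- is essentially that same proof, with the surjection written out on generators rather than extracted from the direct-sum decomposition; your check that relations (1)--(4) are respected (using that $v$ is a source for relation (3) and a sink for relation (4)) is precisely the content of that decomposition. Your primary argument, by contrast, works entirely inside the monoid: the $v$-coefficient gives a homomorphism $\phi\colon M_E\to\mathbb{N}$ with $\phi([\sum_{w\in E^0}w])=1$, which verifies Corollary~\ref{cor4.2}(2) at once, and your bookkeeping is exactly right --- $v$, being a sink, is not regular and so heads no relation $(M)$, and, being a source, occurs as $r(e)$ for no edge $e$, so appears on the right of none. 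Two small observations: invoking the Confluence Lemma is heavier than needed, since $M_E$ is presented by generators $E^0$ and the relations $(M)$, so it suffices to note that the assignment $w\mapsto\delta_{w,v}$ kills both sides of every relation $(M)$ (by the same two facts about $v$) and to apply the universal property of the presentation; and your route is not machinery-free either, since Corollary~\ref{cor4.2} rests on the Ara--Moreno--Pardo isomorphism $M_E\cong\mathcal{V}(L_K(E))$ of Theorem~\ref{thm4.1}. In effect you trade Lemma~\ref{lem 3.3} for Theorem~\ref{thm4.1}: the monoid route shows directly that the order-unit $[\sum_{w\in E^0}w]$ of $M_E$ has UGN and stays within the combinatorial framework used throughout Section~\ref{LpashaveUGNSection}, while the paper's route buys brevity and independence from the $\mathcal{V}$-identification.
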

\begin{proof} Let  $v$ denote the presumed isolated vertex. We then get immediately that
$L_K(E)\cong K \oplus L_K(E_{\setminus v})$, and hence there is a natural surjection from
$L_K(E)$ onto $K$. Obviously, the field $K$ has UGN, so $L_K(E)$ has UGN by Lemma~\ref{lem 3.3}.
\end{proof}

Using Theorem~\ref{thm4.5} and Lemmas~\ref{lem4.8} and~\ref{lem4.9}, we are finally in position to establish the main result of this article.  

\begin{thm}\label{thm4.10}
Let $E$ be a finite graph and $K$ any field. Let \[E= E_0\rightarrow E_1\rightarrow\cdots\rightarrow E_i\rightarrow\cdots
\rightarrow E_\ell = E_{sf}\] be a sequence of graphs which arises in some step-by-step process of source
eliminations. Then $L_K(E)$ has Unbounded Generating Number if and only if either $E_i$ contains an isolated
vertex (for some $0\leq i\leq \ell$), or $E_{sf}$ contains a source cycle.
\end{thm}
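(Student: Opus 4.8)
The plan is to reduce the general finite-graph statement to the source-free case handled in Theorem~\ref{thm4.5}, using the source elimination process together with the Morita invariance of UGN (Theorem~\ref{thm 3.8}) and the isolated-vertex lemma (Lemma~\ref{lem4.9}). First I would dispose of the ``isolated vertex'' disjunct: if some $E_i$ in the chain contains an isolated vertex $v$, then by Lemma~\ref{lem4.9} the algebra $L_K(E_i)$ has UGN. To transfer this back to $L_K(E)$ I would walk back up the chain $E_0 \to \cdots \to E_i$ one source-elimination step at a time, observing that each backward step replaces $E_{j+1}$ by $E_j = (E_{j+1})_{\setminus v_j}$-reversed, i.e.\ $E_j$ is obtained from $E_{j+1}$ by adjoining a source $v_j$. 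The key point is that this adjoined source is \emph{not} isolated (it emits at least one edge, since otherwise it would already have been an isolated vertex rather than a mere source at that stage, or the elimination would not have removed it), so Lemma~\ref{lem4.8} gives a Morita equivalence $L_K(E_j) \sim L_K(E_{j+1})$, and Theorem~\ref{thm 3.8} then propagates UGN. Hence $L_K(E)$ has UGN.

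For the second disjunct, suppose $E_{sf}$ contains a source cycle. If $E_{sf} = E_{triv}$ there is nothing to do (it is acyclic, so this case cannot produce a source cycle), so I may assume $E$ contains cycles and $E_{sf}$ is a genuine finite source-free graph. By Theorem~\ref{thm4.5}, $L_K(E_{sf})$ has UGN. The same backward walk along the elimination chain $E_0 \to \cdots \to E_\ell = E_{sf}$, applying Lemma~\ref{lem4.8} and Theorem~\ref{thm 3.8} at each step where the eliminated source is non-isolated, transfers UGN from $L_K(E_{sf})$ up to $L_K(E)$. This completes the ``if'' direction: in either case $L_K(E)$ has UGN.

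For the converse (``only if''), I would prove the contrapositive: assume no $E_i$ contains an isolated vertex and $E_{sf}$ contains no source cycle, and show $L_K(E)$ fails UGN. Because no $E_i$ has an isolated vertex, every source eliminated along the chain is non-isolated, so Lemma~\ref{lem4.8} applies at every step and Theorem~\ref{thm 3.8} yields that $L_K(E)$ and $L_K(E_{sf})$ have UGN \emph{simultaneously}; thus it suffices to show $L_K(E_{sf})$ fails UGN. Now $E_{sf}$ is finite and source-free. If $E_{sf} = E_{triv}$, then $E$ is acyclic and one checks directly (or via Theorem~\ref{thm4.5} applied trivially) that $L_K(E) \cong L_K(E_{triv}) \cong K$ has UGN, contradicting nothing — so I must be careful here: the acyclic case forces $E_{sf}=E_{triv}$, which is a single non-isolated-free situation, and I would note that an acyclic finite graph with no isolated vertices at any stage in fact cannot occur unless $E$ itself has a source that is isolated, so this degenerate branch needs separate bookkeeping. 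Assuming $E_{sf}$ is a nontrivial source-free graph with no source cycle, Theorem~\ref{thm4.5} directly gives that $L_K(E_{sf})$ does not have UGN, hence neither does $L_K(E)$.

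The main obstacle I anticipate is the careful handling of the backward-propagation and the degenerate acyclic/trivial cases. The propagation itself is clean once one observes that each eliminated source along the chain is non-isolated precisely when no $E_i$ meets the first disjunct, so Lemma~\ref{lem4.8} is applicable at every step and Morita invariance (Theorem~\ref{thm 3.8}) does the rest; the subtlety is verifying that the hypothesis ``no $E_i$ contains an isolated vertex'' really does guarantee each eliminated vertex is non-isolated, and simultaneously reconciling this with the case $E_{sf} = E_{triv}$ (acyclic $E$), where the chain terminates at a single vertex that is itself isolated in $E_{triv}$ — this last vertex triggers the isolated-vertex disjunct and gives UGN, which is consistent. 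Pinning down these boundary cases so that the two disjuncts exactly partition the UGN/non-UGN dichotomy is where the real care lies; the algebraic engine is entirely supplied by Theorems~\ref{thm 3.8} and~\ref{thm4.5} and Lemmas~\ref{lem4.8} and~\ref{lem4.9}.
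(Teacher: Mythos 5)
Your strategy is in substance identical to the paper's: propagate UGN along the source-elimination chain via Lemma~\ref{lem4.8} and Theorem~\ref{thm 3.8}, settle the isolated-vertex disjunct with Lemma~\ref{lem4.9}, and settle the source-free endpoint with Theorem~\ref{thm4.5}. However, there is one concretely wrong step in your write-up: the parenthetical claim that each eliminated source below the stage $i$ at which an isolated vertex appears must be non-isolated because ``the elimination would not have removed it.'' An isolated vertex \emph{is} a source (its $r^{-1}$ is empty), so the source-elimination process is perfectly entitled to remove it, and at any step whose eliminated vertex is isolated Lemma~\ref{lem4.8} simply does not apply, breaking your backward Morita walk. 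The correct repair --- and what the paper actually does --- is to take $j$ \emph{minimal} with $E_j$ containing an isolated vertex; then for every step $E_h\rightarrow E_{h+1}$ with $h<j$ the eliminated source cannot be isolated (otherwise $E_h$ would already contain an isolated vertex, contradicting minimality), so Lemma~\ref{lem4.8} applies at every step down to $E_j$ and Theorem~\ref{thm 3.8} transfers UGN from $L_K(E_j)$ to $L_K(E)$. The same issue touches your second disjunct: if $E_{sf}$ has a source cycle but some eliminated source along the chain is isolated, your walk from $L_K(E_{sf})$ back to $L_K(E)$ breaks at that step; you must note that in that event some $E_i$ contains an isolated vertex and you are back in the (already settled) first disjunct, so you may assume every eliminated source is non-isolated before invoking Theorem~\ref{thm4.5}.

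Organizationally, the paper's proof is cleaner and would also dissolve your anxieties about the degenerate case: it splits once, into ``some $E_i$ has an isolated vertex'' (handled by the minimal-$j$ argument above) and ``no $E_i$ has an isolated vertex,'' in which second case Lemma~\ref{lem4.8} applies at \emph{every} step, $L_K(E)$ is Morita equivalent to $L_K(E_{sf})$, and Theorem~\ref{thm4.5} yields the full biconditional ``UGN $\Leftrightarrow$ $E_{sf}$ contains a source cycle'' in one stroke --- no separate contrapositive needed. In particular the $E_{sf}=E_{triv}$ branch you worried over never arises in the second case: if $E$ is acyclic, the chain contains $E_{triv}$ itself, whose lone vertex is isolated, so the first disjunct fires and correctly predicts that $L_K(E)$ has UGN; while if no $E_i$ has an isolated vertex, then $E$ has a cycle and $E_{sf}$ is a genuine nontrivial source-free graph to which Theorem~\ref{thm4.5} applies. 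With the minimality fix and this case discipline inserted, your argument is the paper's proof.
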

\begin{proof} Assume first that $E_i$ contains an isolated vertex for some $i$. Let $j$ denote
the minimal such $i$. Then at each step of the source elimination process
\[E= E_0\rightarrow E_1\rightarrow\cdots\rightarrow E_j \] the source which is being eliminated
is not an isolated vertex. By Lemma~\ref{lem4.8} we then have that the algebras
$L_K(E)$, $L_K(E_1)$, ..., $L_K(E_j)$ are Morita equivalent one to the other. But $L_K(E_j)$
has UGN by Lemma~\ref{lem4.9}, and hence $L_K(E)$ has UGN by Theorem~\ref{thm 3.8}.

On the other hand, suppose that no $E_i$ contains an isolated vertex. Then Lemma~\ref{lem4.8}
applies at each step of the source elimination process, so that  $L_K(E)$ is
Morita equivalent to $L_K(E_{sf})$. So, by Theorem~\ref{thm 3.8}, $L_K(E)$ has UGN if and only
if $L_K(E_{sf})$ has UGN. As $E_{sf}$ is source-free we may apply Theorem~\ref{thm4.5}, so that
$L_K(E)$ has UGN if and only if $E_{sf}$ contains a source cycle, thus establishing the result.
\end{proof}

We emphasize that  the statement of Theorem \ref{thm4.10} depends not only on the subgraph $E_{sf}$, but on the sequence of source-eliminations as well.   As an easy example, consider the two graphs  $F$ and $G$:
$$ F =   \   \xymatrix{ \bullet  \hspace{-.15in} &   \bullet  
 \ar@(ur,dr)  \ar@(ul,dl)   }   \ \ \ \ \ \    \ \ \mbox{and}  \ \ \ \ \ G =  \ \ \ \ \xymatrix{  \bullet   \ar@(ur,dr)  \ar@(ul,dl)   } \ \ \ \  \ .  $$ 

\noindent
(So $F$ is the disjoint union of $E_{triv}$  with $G$.)  
Then obviously $F_{sf} = G_{sf} = G$.   But $L_K(F)$ has UGN (it has a direct summand isomorphic to $K$), while $L_K(G)$ does not (as $G$ is a source-free graph containing no source cycles.)

We finish this section with a few remarks about Cohn path algebras.   
We present here a specific case of a more general result described in \cite[Section 1.5]{aam:lpa}. Namely,
let $E = (E^0, E^1, s, r)$ be an arbitrary graph and $\Phi$
the set of regular vertices of $E$. Let $\Phi '=\{v'\ |\ v\in \Phi \}$ be a
disjoint copy of $\Phi$. For $v\in \Phi$ and for each edge $e$ in $E^1$ such
that $r_E(e)=v$, we consider a new symbol $e'$. We define the graph
$F(E)$, as follows: $$F(E)^0 := E^0\sqcup \Phi' \text{ and } F(E)^1 :=E^1\sqcup \{e'\ |\ r_E(e)\in
\Phi \},$$ and for each $e\in E^1$, $s_{F(E)}(e)=s_E(e),\ s_{F(E)}(e')=s_E(e),\
r_{F(E)}(e)=r_E(e)$, and $r_{F(E)}(e')=r_E(e)'$.   For instance, if 
$$E =  \ \ \ \ \xymatrix{  \bullet^v   \ar@(ul,ur)^e \ar@(dr,dl)^f } \  ,\ \ \ \mbox{then}  \ \ \ F(E) = \xymatrix{  \bullet^v   \ar@(ul,ur)^e \ar@(dr,dl)^f \ar@/^.5pc/[r]^{e'}      \ar@/^-.5pc/[r] _{f'}        &\bullet^{v'}   } .$$
\noindent
Ara, Siles Molina and the first author have shown  that for any graph $E$ and any field $K$,  there is an
isomorphism of $K$-algebras 

\medskip

\hspace{1.5in} $C_K(E)\cong L_K(F(E)).$  \hfill  (cf. \cite[Theorem 1.5.17]{aam:lpa})

\medskip
\noindent
(So, perhaps counterintuitively, every Cohn path algebra is in fact isomorphic to a Leavitt path algebra.)  From this observation
and Theorem~\ref{thm4.10}, we immediately get a criterion to determine which 
 Cohn path algebras of finite graphs have Unbounded Generating Number.

\begin{cor}\label{cor4.11}
Let $E$ be a finite graph and $K$ any field. Then
the Cohn path algebra $C_K(E)$ has Unbounded Generating Number if and only if $E$
satisfies at least one of the following conditions:
\begin{enumerate}
\item[(1)] $E$ contains a source; or 

\item[(2)] $E$ contains a source cycle. 
\end{enumerate}
\end{cor}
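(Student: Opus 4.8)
The plan is to transfer the question entirely to Leavitt path algebras via the isomorphism $C_K(E)\cong L_K(F(E))$ recorded just above, and then apply Theorem~\ref{thm4.10} to the graph $F(E)$. That theorem will tell us that $C_K(E)$ has UGN if and only if either some graph $F(E)_i$ in a source-elimination sequence for $F(E)$ contains an isolated vertex, or $F(E)_{sf}$ contains a source cycle; so the whole argument reduces to translating these two conditions on $F(E)$ back into the conditions (1), (2) on $E$. To set this up I would first record the basic structural features of $F(E)$: every primed vertex $v'\in\Phi'$ is a sink (no edge of $F(E)$ has a primed source), so no cycle nor any edge of a cycle can involve a primed vertex, whence the cycles of $F(E)$ coincide with the cycles of $E$. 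Moreover, for an original vertex $v\in E^0$ one has $r_{F(E)}^{-1}(v)=r_E^{-1}(v)$ (primed edges point only to primed vertices), while for a primed vertex $v'$ one has $|r_{F(E)}^{-1}(v')|=|r_E^{-1}(v)|$. Consequently a vertex of $F(E)$ is a source exactly when the corresponding vertex of $E$ is a source.

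For sufficiency, suppose first that $E$ contains a source $v$. If $v$ is a sink (hence isolated in $E$ and non-regular), then $v$ is already isolated in $F(E)=F(E)_0$; if instead $v$ emits an edge, then $v$ is regular, its primed copy $v'$ exists, and since $r_E^{-1}(v)=\emptyset$ the vertex $v'$ receives no edge and emits none, so $v'$ is isolated in $F(E)_0$. In either case $F(E)_0$ contains an isolated vertex, and Theorem~\ref{thm4.10} gives UGN. Suppose next that $E$ contains a source cycle $c$. Each vertex of $c$ is regular, $c$ is still a cycle in $F(E)$, and $|r_{F(E)}^{-1}(v)|=|r_E^{-1}(v)|=1$ for every $v\in c^0$, the unique incoming edge being the preceding cycle edge, whose source again lies on $c$. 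Since every vertex of $c$ lies in $T_{F(E)}(c)$, the cycle and its edges survive the source-elimination process, and the unique incoming edge at each $v\in c^0$ survives as well; hence $c$ is a source cycle in $F(E)_{sf}$, and Theorem~\ref{thm4.10} again yields UGN.

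For necessity I would argue by contraposition: assume $E$ has neither a source nor a source cycle. Because $E$ is source-free, the structural observations show that $F(E)$ is source-free as well (each original $v$ and each primed $v'$ inherits a nonempty set of incoming edges), so no $F(E)_i$ can contain an isolated vertex and the source-elimination sequence is trivial, i.e.\ $F(E)_{sf}=F(E)$. It then remains to check that $F(E)$ has no source cycle: any such cycle would be a cycle of $E$ with $|r_E^{-1}(v)|=|r_{F(E)}^{-1}(v)|=1$ on all of its vertices, i.e.\ a source cycle of $E$, contradicting the hypothesis. Thus neither alternative of Theorem~\ref{thm4.10} holds for $F(E)$, so $C_K(E)\cong L_K(F(E))$ fails UGN, completing the contrapositive.

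I expect the main obstacle to be the careful bookkeeping in the structural description of $F(E)$, in particular verifying that the isolated vertices of $F(E)$ correspond precisely to the sources of $E$ (splitting into the sink and non-sink cases), and confirming that source elimination on $F(E)$ neither destroys a source cycle coming from $E$ nor creates a spurious one. The clean simplification that $E$ source-free forces $F(E)$ source-free, so that $F(E)_{sf}=F(E)$, is what makes the necessity direction go through without having to track in-degrees along a nontrivial source-elimination sequence.
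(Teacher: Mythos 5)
Your proposal is correct and takes essentially the same route as the paper's proof: both transfer the problem through the isomorphism $C_K(E)\cong L_K(F(E))$ and apply Theorem~\ref{thm4.10}, with the identical case analysis (a source of $E$ that is a sink is itself isolated in $F(E)$, while a non-sink source $v$ yields an isolated primed copy $v'$; a source cycle of $E$ persists into $F(E)_{sf}$; and, contrapositively, $E$ having neither sources nor source cycles forces the same for $F(E)$, so neither alternative of Theorem~\ref{thm4.10} holds). Your write-up merely makes explicit some bookkeeping the paper leaves to the reader, such as $r_{F(E)}^{-1}(v)=r_E^{-1}(v)$ for original vertices, the coincidence of the cycles of $F(E)$ with those of $E$, and the survival of the in-degree-one condition under source elimination.
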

\begin{proof} ($\Longrightarrow$)  Assume that $E$  contains neither sources nor source cycles.   By the construction
of the graph $F(E)$, it is easy to see that $F(E)$ also  contains neither  sources nor source cycles.  
Then, by Theorem \ref{thm4.10}, $C_K(E)\cong L_K(F(E))$ does not have UGN. 

($\Longleftarrow$) If $E$ contains a source cycle $c$,  then by considering the explicit construction given above, it is clear that  the graph $F(E)$
must contain such a cycle, and hence $F(E)_{sf}$ contains such a cycle too.
So  by Theorem \ref{thm4.10}, $L_K(F(E))$ has UGN, and hence  so does $C_K(E)$.

On the other hand, suppose $E$ contains a source vertex $v$. If
$v$ is a sink (i.e., if $v$ is isolated) in $E$ then $v$ is an isolated vertex in $F(E)$. Otherwise, since $v$ is a
source in $E$, the corresponding  vertex $v'$ is an isolated vertex in $F(E)$. Hence, in this case,
$F(E)$ always contains an isolated vertex. Then, by 
Lemma \ref{lem4.9}, $L_K(F(E))$
has UGN, hence so does $C_K(E)$. 
\end{proof}

In \cite[Theorem 9]{ak:cpahibn}, Kanuni and the first author showed that
the Cohn path algebra of any finite graph has the IBN property. Using this result and Theorem \ref{thm4.10},
we easily give an example of a Leavitt path algebra which has the IBN property, but does not
have the UGN property.

\begin{example}\label{IBNnotUGN}   Let $K$ be a field, and let $G$ be the graph 

$$\xymatrix{  \bullet^x   \ar@(ul,ur) \ar@(dr,dl) \ar@/^.5pc/[r]      \ar@/^-.5pc/[r]         &\bullet^y  } \   \ .$$

\bigskip

\noindent
Then, by Theorem \ref{thm4.10},
$L_K(G)$ does not have UGN.   But as shown previously, $G = F(E)$ where $E$ is the graph 
$$\xymatrix{  \bullet   \ar@(ul,ur) \ar@(dr,dl)   } \ \ \ .$$

\bigskip
\noindent
So $L_K(G) \cong C_K(E)$, and so has IBN by \cite[Theorem 9]{ak:cpahibn}.   

It is perhaps instructive to explicitly consider $\mathcal{V}(L_K(G)) \cong M_G$ in this case.   Specifically, $M_G $ is the free abelian monoid on $\{x,y\}$ with the one relation $x = 2x + 2y$.    The standard order-unit  of $M_G$ is $[x+y]$.   It is not hard to show that any equation of the form $n[x+y] = m[x+y]$ in $M_G$ necessarily gives $m=n$. (The one relation, applied to an element of $Y_G$ of the form $t = nx + ny$, will either yield $t$ itself, or an element $t' = ix + jy$ for which $i\neq j$.)   On the other hand, the relation $x = 2x + 2y$ gives $[x + y] = [2x + 3y] = 2[x+y] + [y]$ in $M_G$, so that $[x+y]$ does not have the UGN property in $M_G$.  \hfill $\Box$ 
\end{example}

%
%

\section{Leavitt path algebras having cancellation of projectives}\label{4moreconditionssection}
In this, the article's short final section, we identify the graphs $E$ for which the  Leavitt path algebra $L_K(E)$ satisfies conditions (3) through (5) mentioned in the Introduction.    We briefly review some terminology.

\smallskip

Let $E$ be a graph, and $p  = e_{1} \cdots e_{n}$ a path in $E$.  Then an edge $f\in E^1$ is an  \emph{exit} for $p$ 
 if $s(f) = s(e_{i})$ but $f \ne e_{i}$ for
some $1 \le i \le n$. 
 $E$ is said to be a \emph{no-exit graph} if no cycle in $E$ has an exit.


A ring $R$ is called \emph{directly finite} if, for any $a, b\in R$, $ab=1$ implies
$ba=1$.


 $R$ is said to be \emph{stably finite}  if for any $n\in \mathbb{N}^+$, $R^n
\cong R^n\oplus K$ (as right $R$-modules) implies $K=0$.


 $R$ is called a \emph{Hermite ring}
 if for all $m, n \in \mathbb{N}^+$ 
and any right $R$-module $K$, $R^n\cong R^m\oplus K$ (as right $R$-modules) implies that $n\geq m$ and $K\cong R^{n-m}$.

Finally, $R$ is said to have \emph{cancellation of projectives} if  for any finitely generated
projective right $R$-modules $P$ and $P'$, $P\oplus R\cong P'\oplus R$ (as right $R$-modules) implies that
$P\cong P'$.

\smallskip

Short, straightforward computations immediately establish that, for any unital ring $R$, 
\begin{center}
cancellation of projectives $\Rightarrow$ Hermite $\Rightarrow$ stably finite $\Rightarrow$ directly finite.
\end{center}
\noindent 
For general rings, there are examples which show that none of these implications can be reversed.   
Germane here is  the observation  that it is easy to establish that 
\begin{center}
 stably finite $\Rightarrow$ Unbounded Generating Number;
\end{center}
\noindent
however, examples exist which show that directly finite does not in general imply Unbounded Generating Number  (nor does UGN imply directly finite).    

An abelian monoid is called {\it cancellative} in case, for every $m, m', m'' \in M$, if $m' + m = m'' + m$, then $m' = m''$.  Obviously the monoid $\mathbb{N}$ is cancellative; almost as obviously, so too is $\mathbb{N}^t$ for any positive integer $t$.    


\begin{rem}\label{rem5.1}
A ring $R$ has cancellation of projectives if and only if the monoid $\mathcal{V}(R)$ is cancellative.   This is not hard to see.  Indeed, assume that $R$ has cancellation of projectives as defined above, and suppose 
$[P] + [Q] = [P'] + [Q]$ in $\mathcal{V}(R)$, i.e.,  $P\oplus Q\cong
P'\oplus Q$ as right $R$-modules. Since $[R]$ is an order-unit in  $\mathcal{V}(R)$, 
there exist $n \in \mathbb{N}^+$  and a right $R$-module $K$ such that $Q\oplus K
\cong R^n$. But then
$P\oplus R^n\cong P\oplus Q\oplus K\cong P'\oplus Q\oplus K\cong P'\oplus R^n,$
so $P\cong P'$ as $R$ has cancellation of projectives, i.e., $[P]=[P']$.    The other implication is immediate.     \hfill $\Box$ 
\end{rem}  

Here is the relationship between these properties in the context of Leavitt path algebras.  

\begin{thm}\label{thm5.2}
Let $E$ be a finite graph and $K$ any field. Then the following are equivalent:
\begin{enumerate}
\item[(1)] $L_K(E)$ has cancellation of projectives;
\item[(2)] $L_K(E)$ is Hermite;
\item[(3)] $L_K(E)$ is stably finite;
\item[(4)] $L_K(E)$ is directly finite;
\item[(5)]  $E$ is a no-exit graph.
\end{enumerate}
\end{thm}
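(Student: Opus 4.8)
The plan is to close the cycle of implications. The chain (1)~$\Rightarrow$~(2)~$\Rightarrow$~(3)~$\Rightarrow$~(4) has already been recorded, just before the theorem, as holding for every unital ring, so it suffices to establish the two implications (4)~$\Rightarrow$~(5) and (5)~$\Rightarrow$~(1).

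For (4)~$\Rightarrow$~(5) I would argue contrapositively: assuming $E$ has a cycle $c = e_1\cdots e_n$ based at $v$ possessing an exit, I would exhibit a failure of direct finiteness. First I would record the elementary fact that direct finiteness passes to corners: if $R$ is directly finite and $\epsilon=\epsilon^2\in R$, then for $a,b\in\epsilon R\epsilon$ one computes $(a+(1-\epsilon))(b+(1-\epsilon))=ab+1-\epsilon$, so $ab=\epsilon$ forces $(a+(1-\epsilon))(b+(1-\epsilon))=1$, whence $ba=\epsilon$; thus $\epsilon R\epsilon$ is directly finite. Taking $\epsilon=v$, it remains only to show the corner $vL_K(E)v$ is not directly finite. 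Writing $c$ for the product $e_1\cdots e_n$, the defining relations give $c,c^*\in vL_K(E)v$ and $c^*c=v$, so the whole issue is whether $cc^*\neq v$. This is exactly where the exit enters. If $f\neq e_i$ satisfies $s(f)=s(e_i)$, then conjugating by the initial segment $p=e_1\cdots e_{i-1}$ (with the obvious convention when $i=1$) gives $p^*(cc^*)p=qq^*$ where $q=e_i\cdots e_n$, while $p^*vp=s(e_i)$. Since $q^*f=0$ the idempotents $qq^*$ and $ff^*$ are orthogonal, and $ff^*\neq 0$ because $f^*f=r(f)\neq 0$; hence $qq^*\neq s(e_i)$ (otherwise $ff^*=s(e_i)ff^*=qq^*ff^*=0$). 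Therefore $cc^*\neq v$, and $vL_K(E)v$, hence $L_K(E)$, is not directly finite.

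For (5)~$\Rightarrow$~(1), by Remark~\ref{rem5.1} and Theorem~\ref{thm4.1} it is enough to show that $M_E$ is cancellative for a no-exit graph; I would in fact show $M_E$ is free abelian. The no-exit hypothesis forces each vertex lying on a cycle to emit only its cycle edge, so distinct cycles are vertex-disjoint and all vertices of a fixed cycle $c$ are identified in $M_E$. Let $P$ be the set of sinks and $\mathcal{C}$ the set of cycles. I would define $g\colon E^0\to \mathbb{N}^{P\sqcup\mathcal{C}}$ by $g(w)=\mathbf{1}_w$ for a sink $w$, $g(v)=\mathbf{1}_c$ for $v$ on the cycle $c$, and $g(v)=\sum_{e\in s^{-1}(v)}g(r(e))$ for the remaining (``transient'') regular vertices; since transient vertices lie on no cycle, the transient part of $E$ is acyclic, so this recursion terminates and is well defined. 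By construction $g$ respects relation $(\mathrm M)$ of Definition~\ref{Tdefn}, so it induces a (clearly surjective) monoid homomorphism $\phi\colon M_E\to \mathbb{N}^{P\sqcup\mathcal{C}}$. For injectivity I would show every class has a ``reduced'' representative supported on $P$ together with one chosen base vertex per cycle: applying $(\mathrm M)$ to transient vertices pushes all support onto sinks and cycles (terminating, by acyclicity of the transient part), and applying $(\mathrm M)$ around a cycle moves its support to the chosen base vertex. If $\phi([x])=\phi([y])$, pass to reduced representatives $r_x,r_y$; since $\phi$ is $(\mathrm M)$-invariant, $\phi(r_x)=\phi(r_y)$, and $\phi$ simply reads off the coefficient vector of a reduced element, so $r_x=r_y$ in $Y_E$ and $[x]=[y]$. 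Thus $\phi$ is an isomorphism, $M_E\cong\mathbb{N}^{P\sqcup\mathcal{C}}$ is cancellative, and (1) follows from Remark~\ref{rem5.1}.

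The main obstacle is the injectivity of $\phi$ in (5)~$\Rightarrow$~(1): one must verify carefully that the reduction to normal form terminates and that the invariant $g$ is consistently defined. The no-exit hypothesis is precisely what rules out the collapsing relations (such as $x=2x+2y$ in Example~\ref{IBNnotUGN}) that destroy cancellativity, and it is worth noting that the same conclusion could alternatively be obtained by invoking the structural description of no-exit Leavitt path algebras as direct sums of matrix rings over $K$ and over $K[x,x^{-1}]$, though the monoid argument above keeps the proof self-contained.
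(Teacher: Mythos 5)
Your proposal is correct, and while your treatment of (4)~$\Rightarrow$~(5) is essentially the paper's argument in disguise, your proof of (5)~$\Rightarrow$~(1) takes a genuinely different route. For (4)~$\Rightarrow$~(5), the paper (following \cite{Vas}) rebases the cycle $c$ at $v=s(f)$ and checks directly that $a=c+x$, $b=c^*+x$ with $x=\sum_{w\neq v}w$ satisfy $ba=1$ but $ab\neq 1$; your corner-ring reduction with $\epsilon=v$ is exactly this computation (note $x=1-v$), with the added care of conjugating by the initial segment $p$ to handle an exit at an arbitrary position of the cycle rather than rebasing --- a cosmetic, not substantive, difference. For (5)~$\Rightarrow$~(1), however, the paper invokes the structure theorem for no-exit (locally finite) Leavitt path algebras from \cite{apm:lflpa}, obtaining $L_K(E)\cong(\bigoplus_i M_{m_i}(K[x,x^{-1}]))\oplus(\bigoplus_j M_{n_j}(K))$, and then uses freeness of finitely generated projectives over $K[x,x^{-1}]$ (a Serre-type fact quoted from \cite{l:spopm}) to conclude $\mathcal{V}(L_K(E))\cong\mathbb{N}^{\ell+k}$; you instead prove $M_E\cong\mathbb{N}^{P\sqcup\mathcal{C}}$ directly at the monoid level via Theorem~\ref{thm4.1} and Remark~\ref{rem5.1}, by constructing the invariant $g$ and a normal form supported on sinks and one base vertex per cycle. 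Your injectivity argument is sound and pleasantly does not require any confluence: existence of a reduced representative in each class (termination follows from your height induction on the acyclic transient part, plus the observation that rewriting at a cycle vertex creates no new support precisely because there are no exits) together with the fact that $\phi$ separates reduced elements already forces $r_x=r_y$ in $Y_E$. What each approach buys: yours is self-contained and elementary, avoiding both \cite{apm:lflpa} and the projective-module theory of $K[x,x^{-1}]$; the paper's yields strictly more information --- the explicit ring decomposition, which underpins the subsequent remark that ``$L_K(E)$ is Noetherian'' can be added to the list of equivalent conditions, something your monoid isomorphism alone does not give. One point worth making explicit in a final write-up: in your orthogonality step you use that $ff^*\neq 0$ because $r(f)\neq 0$ in $L_K(E)$; the nonvanishing of vertices is standard but should be cited (e.g., \cite{ap:tlpaoag05}).
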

\begin{proof}  
 By the discussion above, 
we need only  show that (4) implies (5), and (5) implies (1).   
That (4) and (5) are equivalent is established in \cite[Theorem 4.12]{Vas}.   We give here a very brief outline of one direction.   Suppose  $E$ contains a cycle $c$ with an exit $f$, and let $v = s(f)$.  We may view $c$ as being based at $v$.  Then  $c^*c = v$.   Let $x:= \sum_{w\in E^0, w\neq v}w \in L_K(E).$  Let $a:= c + x$
and $b:= c^* +x $. It is easily verified that
$ba = 1$ in $L_K(E)$.   But $ab = cc^* +x$; since $c^*f = 0$ we get $abf = 0$, so in particular  $ab \neq 1$, so 
 $L_K(E)$ is not directly finite. 



So now  suppose that (5) holds; we show that $L_K(E)$ has cancellation of projectives, i.e., we show that $\mathcal{V}(L_K(E))$ is a cancellative monoid. Let $\{c_1,..., c_\ell\}$ and
$\{v_1, ..., v_k\}$  be the sets of cycles and sinks in $E$, respectively. (Because $E$ is a no-exit graph, the cycles in $E$ are necessarily disjoint.)  Then, by
\cite[Theorems 3.8 and 3.10]{apm:lflpa} 
we get 
$$L_K(E) \  \cong \  (\bigoplus_{i=1}^\ell M_{m_i}(K[x,x^{-1}]))\oplus(\bigoplus_{j=1}^{k}M_{n_j}(K)),$$
where for each $1\leq i\leq \ell$, $m_i$ is the number of paths ending in a fixed
(although arbitrary) vertex of the cycle $c_i$ which do not contain the cycle
itself, and for each $1\leq j\leq k$, $n_j$ is the number of paths ending in
the sink $v_j$.

Every finitely generated projective $K[x, x^{- 1}]$-module $P$ is free
(see, e.g., \cite[Corollary 4.10, page 189]{l:spopm}),  and $K[x, x^{- 1}]$ has IBN, so  we immediately get that $\mathcal{V}(K[x,x^{-1}])\cong \mathbb{N}$. But then, as $\mathcal{V}$ is a Morita invariant and preserves ring direct sums, the displayed ring isomorphism yields that $\mathcal{V}(L_K(E))$ is isomorphic to the cancellative monoid  $\mathbb{N}^\ell \oplus \mathbb{N}^{k}\cong\mathbb{N}^{\ell+k}$. 
\end{proof}

By \cite[Theorem 3.10]{apm:lflpa}, we may add the statement  ``$L_K(E)$ is Noetherian" to Theorem \ref{thm5.2}.   Although in general the Noetherian condition on a ring $R$ is enough to yield that $R$ is stably finite, it is not sufficient in general to yield that $R$ is Hermite (neither, then, that $R$ has cancellation of projectives).   In particular, we must utilize the explicit structure of Noetherian Leavitt path algebras (as presented in the displayed isomorphism in the proof of Theorem \ref{thm5.2}) in order to conclude the cancellation of projectives property.   


\begin{example}\label{Toeplitz}
Let $K$ be a field, and consider the Toeplitz graph 
$$ \mathcal{T} = \ \ \  \ \  \xymatrix{  \bullet   \ar@(dl,ul)  \ar[r]        &\bullet  }$$

\noindent described in Examples \ref{monoidexamples}.  By Theorem \ref{thm4.10}, $L_K(E)$ has UGN.
However, $L_K(E)$ does not have simultaneously the directly finite, stably finite, Hermite and cancellation of projectives
properties, by Theorem~\ref{thm5.2}.   \hfill $\Box$ 
\end{example}

\begin{rem}\label{summary}
In summary, we recall the hierarchy of   five cancellation properties of rings presented in the Introduction.     We have established that, within the class of Leavitt path algebras,  the IBN property is strictly weaker than the UGN property; the UGN property is strictly weaker than the stably finite property; and the stably finite, Hermite, and cancellation of projective properties are equivalent.   Moreover, the graphs $E$ for which $L_K(E)$ has the UGN property, and the graphs $F$ for which $L_K(F)$ has any one of the final three properties, have been explicitly described.

It remains an open question to give graph-theoretic conditions on $E$ which describe precisely the Leavitt path algebras $L_K(E)$ having the IBN property.    \hfill $\Box$ 
\end{rem}

We finish this paper by giving a description of the Cohn
path algebras of finite graphs that have any one of the above properties.

\begin{cor}\label{cor5.4}
Let $E$ be a finite graph and $K$ any field. Then the following  are equivalent:
\begin{enumerate}
\item[(1)] $C_K(E)$ has cancellation of projectives;
\item[(2)] $C_K(E)$ is Hermite;
\item[(3)] $C_K(E)$ is stably finite;
\item[(4)] $C_K(E)$ is directly finite;
\item[(5)] $E$ is acyclic.
\end{enumerate}
\end{cor}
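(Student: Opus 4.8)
The plan is to reduce Corollary~\ref{cor5.4} to Theorem~\ref{thm5.2} via the by-now-familiar isomorphism $C_K(E) \cong L_K(F(E))$ established earlier in the excerpt. Since all of conditions (1)--(4) in Theorem~\ref{thm5.2} are already known to be equivalent (for \emph{any} finite graph, in particular for $F(E)$), the only real content is to show that the graph-theoretic condition ``$E$ is acyclic'' corresponds exactly to the condition ``$F(E)$ is a no-exit graph.''

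First I would invoke the isomorphism $C_K(E)\cong L_K(F(E))$. By Theorem~\ref{thm5.2} applied to the finite graph $F(E)$, the four ring-theoretic properties (1)--(4) for $C_K(E)$ are all equivalent to one another, and each is equivalent to the statement that $F(E)$ is a no-exit graph. Thus the entire corollary collapses to the single graph-theoretic equivalence
\begin{center}
$E$ is acyclic $\iff$ $F(E)$ is a no-exit graph.
\end{center}
The remainder of the proof is a direct examination of the construction of $F(E)$.

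The key step is to analyze cycles and exits in $F(E)$. The crucial structural observation is that $F(E)$ is obtained from $E$ by adjoining, for each regular vertex $v$, a new sink $v'$ together with duplicated edges $e'$ (one for each edge $e$ with $r_E(e)=v$) pointing into $v'$. Since every newly added vertex $v'$ is a sink and each new edge $e'$ terminates at such a sink, no new cycle is created, and the edges of $E^1$ inside $F(E)$ retain their original source and range; hence \emph{a cycle in $F(E)$ is precisely a cycle in $E$}. For the forward direction, if $E$ is acyclic then $F(E)$ has no cycles at all, so it is vacuously a no-exit graph. For the contrapositive of the reverse direction, suppose $E$ contains a cycle $c$, and let $v$ be a vertex on $c$. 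Then $v=r_E(e)$ for the edge $e$ of $c$ entering $v$, so $v$ is regular, and by construction there is a duplicate edge $e'$ in $F(E)^1$ with $s_{F(E)}(e')=s_E(e)=s_{F(E)}(e)$ but $e'\neq e$. This $e'$ is an exit for the cycle $c$ in $F(E)$, so $F(E)$ is not a no-exit graph. This proves the equivalence, and hence the corollary.

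I expect the \textbf{main obstacle} to be purely bookkeeping: one must verify carefully that the construction of $F(E)$ introduces no spurious cycles and that every cycle of $E$ genuinely acquires an exit $e'$ in $F(E)$. The latter relies on the fact that every vertex lying on a cycle of $E$ is automatically regular (it emits the cycle edge and receives one), which guarantees the duplicate edges exist. No delicate calculation is needed beyond unwinding the definition of $F(E)$; the essential work has already been done in Theorem~\ref{thm5.2} and in the $C_K(E)\cong L_K(F(E))$ isomorphism.
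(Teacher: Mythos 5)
Your proposal is correct and takes essentially the same route as the paper: both reduce the corollary, via the isomorphism $C_K(E)\cong L_K(F(E))$, to Theorem \ref{thm5.2} together with the graph-theoretic equivalence that $F(E)$ is a no-exit graph if and only if $E$ is acyclic. The only difference is that you spell out the verification of that equivalence (cycles in $F(E)$ are exactly cycles in $E$, and each cycle of $E$ acquires an exit $e'$ since its vertices are regular), which the paper dismisses as ``easy to see.''
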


\begin{proof}   We  have that $C_K(E)\cong L_K(F(E))$, where $F(E)$ is the graph constructed from $E$ given near the end of Section \ref{LpashaveUGNSection}.  Using that description,   it is easy to see that
$F(E)$ is a no-exit graph if and only if $E$ is acyclic. The result then follows immediately from 
Theorem~\ref{thm5.2}.
\end{proof}

 \vskip 0.5cm {

\end{document}